\setlist[enumerate,1]{label={\upshape(\roman*)}}
\newcommand{\Rmnum}[1]
{\expandafter\@slowromancap\romannumeral #1@}
\newtheorem{thm}{Theorem}[section]
\newtheorem{lemma}[thm]{Lemma}
\newtheorem{example}[thm]{Example}
\newtheorem{defin}[thm]{Definition}
\newtheorem{fact}[thm]{Fact}
\theoremstyle{definition}
\newtheorem{remark}[thm]{Remark}
\def\wz{\tilde}
\title[Weakly distance-regular digraphs of diameter 2]{Weakly distance-regular digraphs of diameter 2}
\date{}
\thanks{*Corresponding author}
\author[Wang]{Xiangli Wang}
\address{Laboratory of Mathematics and Complex Systems (MOE),~School of Mathematical Sciences\\Beijing Normal University\\Beijing 100875\\China}
\email{wangxl@mail.bnu.edu.cn}
\author[Yang]{Yuefeng Yang*}
\address{School of Science\\China University of Geosciences\\Beijing 100083\\China}
\email{yangyf@cugb.edu.cn}
\begin{document}
	
\begin{abstract}
Weakly distance-regular digraphs is a directed version of distance-regular graphs. In this paper, we characterize all weakly distance-regular digraphs of diameter $2$.
\end{abstract}
	
\keywords{weakly distance-regular digraph; association scheme; distance-regular digraph; wreath product; wedge product.}
	
\subjclass[2010]{05E30}
	
\maketitle
\section{Introduction}
As a natural generalization of distance-regular graphs (for the background of distance-regular graphs, refer to \cite{AEB98,DKT16}) and distance-regular digraphs (see the definition in Section 2), Wang and Suzuki \cite{KSW03} proposed the concept of weakly distance-regular digraphs. Distance-regular graphs, distance-regular digraphs and weakly distance-regular digraphs can all be regarded as association schemes.

A \emph{$d$-class association scheme} $\mathfrak{X}$ is a pair $(X,\{R_{i}\}_{i=0}^{d})$, where $X$ is a finite set, and each $R_{i}$ is a
nonempty subset of $X\times X$ satisfying the following axioms (see \cite{EB21,EB84,PHZ96,PHZ05} for a background of the theory of association schemes):
\begin{enumerate}
\item \label{1} $R_{0}=\{(x,x)\mid x\in X\}$ is the diagonal relation;
	
\item \label{2} $X\times X=R_{0}\cup R_{1}\cup\cdots\cup R_{d}$, $R_{i}\cap R_{j}=\emptyset~(i\neq j)$;
	
\item \label{3} for each $i$, $R_{i}^{\top}=R_{i^{*}}$ for some $0\leq i^{*}\leq d$, where $R_{i}^{\top}=\{(y,x)\mid(x,y)\in R_{i}\}$;
	
\item \label{4} for all $i,j,h$, the cardinality of the set $$P_{i,j}(x,y):=R_{i}(x)\cap R_{j^{*}}(y)$$ is constant whenever $(x,y)\in R_{h}$, where $R(x)=\{y\mid (x,y)\in R\}$ for $R\subseteq X\times X$ and $x\in X$. This constant is denoted by $p_{i,j}^{h}$.
\end{enumerate}
A $d$-class association scheme is also called an association scheme with $d$ classes. The integers $p_{i,j}^{h}$ are called the \emph{intersection numbers} of $\mathfrak{X}$. We say that $\mathfrak{X}$ is \emph{commutative} if $p_{i,j}^{h}=p_{j,i}^{h}$ for all $0\leq i,j,h\leq d$. The subsets $R_{i}$ are called the \emph{relations} of $\mathfrak{X}$. For each $i$, the integer $k_{i}$ $(=p_{i,i^{*}}^{0})$ is called the \emph{valency} of $R_{i}$. A relation $R_{i}$ is called \emph{symmetric} if $i=i^{*}$, and \emph{non-symmetric} otherwise. An association scheme is called \emph{symmetric} if all relations are symmetric, and \emph{non-symmetric} otherwise.

In order to state our main result, we introduce some basic notations about weakly distance-regular digraphs. A \emph{digraph} $\Gamma$ is a pair $(V\Gamma, A\Gamma)$ where $V\Gamma$ is a finite nonempty set of vertices and $A\Gamma$ is a set of ordered pairs (\emph{arcs}) $(x, y)$ with distinct vertices $x$ and $y$. For any arc $(x,y)\in A\Gamma$, if $A\Gamma$ also contains an arc $(y,x)$, then $\{(x,y),(y,x)\}$ can be viewed as an {\em edge}. We say that $\Gamma$ is an \emph{undirected graph} or a {\em graph} if $A\Gamma$ is a symmetric relation. For the digraphs $\Gamma$ and $\Sigma$, the \emph{lexicographic product} of $\Gamma$ and $\Sigma$ is the digraph with the vertex set $V\Gamma\times V\Sigma$ such that $((u_1, u_2),(v_1, v_2))$ is an arc if and only if either $(u_1, v_1)\in A\Gamma$, or $u_1 = v_1$ and $(u_2, v_2)\in A\Sigma$.

A \emph{path of length} $r$ from $x$ to $y$ in the digraph $\Gamma$ is a finite sequence of vertices $(x=w_{0},w_{1},\ldots,w_{r}=y)$ such that $(w_{t-1}, w_{t})\in A\Gamma$ for $1\leq t\leq r$. A digraph (resp. graph) is said to be \emph{strongly connected} (resp. \emph{connected}) if, for any vertices $x$ and $y$, there is a path from $x$ to $y$. The length of a shortest path from $x$ to $y$ is called the \emph{distance} from $x$ to $y$ in $\Gamma$, denoted by $\partial_\Gamma(x,y)$. The maximum value of distance function in $\Gamma$ is called the \emph{diameter} of $\Gamma$. We define $\Gamma_{i}$ ($0\leq i\leq d$) to be the set of ordered pairs $(x,y)$ with $\partial_{\Gamma}(x,y)=i$, where $d$ is the diameter of $\Gamma$. A path $(w_{0},w_{1},\ldots,w_{r-1})$ is called a \emph{circuit of length} $r$ when $(w_{r-1},w_0)\in A\Gamma$. The \emph{girth} of $\Gamma$ is the length of a shortest circuit in $\Gamma$. Let $\wz{\partial}_{\Gamma}(x,y):=(\partial_{\Gamma}(x,y),\partial_{\Gamma}(y,x))$ be the \emph{two-way distance} from $x$ to $y$ in $\Gamma$, and $\wz{\partial}(\Gamma)$ the set of all pairs $\wz{\partial}_{\Gamma}(x,y)$. For any $\wz{i}\in\wz{\partial}(\Gamma)$, we define $\Gamma_{\wz{i}}$ to be the set of ordered pairs $(x,y)$ with $\wz{\partial}_{\Gamma}(x,y)=\wz{i}$.

A strongly connected digraph $\Gamma$ is said to be \emph{weakly distance-regular} if the configuration $\mathfrak{X}(\Gamma)=(V\Gamma,\{\Gamma_{\tilde{i}}\}_{\tilde{i}\in\tilde{\partial}(\Gamma)})$ is an association scheme. We call $\mathfrak{X}(\Gamma)$ the \emph{attached scheme} of $\Gamma$. We say that $\Gamma$ is \emph{commutative} if $\mathfrak{X}(\Gamma)$ is commutative. For more information about weakly distance-regular digraphs, see \cite{YF22,AM1,HS04,LS,YLS,KSW03,KSW04,YYF16,YYF18,YYF20,YYF22,YYF,QZ23,QZ24}.

Distance-regular graphs of diameter $2$ (also known as strongly regular graphs; see \cite{AEB98,AEB22,PJ99,CG01,JJ79} for details on the general theory of strongly regular graphs) are essentially the same as symmetric $2$-class association schemes. Distance-regular digraphs of diameter $2$ are equivalent to non-symmetric $2$-class association schemes. As a generazation of distance-regular graphs and distance-regular digraphs, this paper characterizes all weakly distance-regular digraphs of diameter $2$. See Section 2 for precise definitions of a P-polynomial association scheme, a subscheme, a wreath product and a wedge product of association schemes.

\begin{thm}\label{th1}
Let $\mathfrak{X}=(X,\{R_{l}\}_{l=0}^{d})$ be an association scheme. The digraph $(X,A)$ is a weakly distance-regular digraph of diameter $2$ with $\mathfrak{X}$ as its attached scheme if and only if $\mathfrak{X}$ has exactly one pair of non-symmetric relations, say $R_1^{\top}=R_2$, and one of the following holds:
\begin{enumerate}
\item\label{th1-1} $d=2$ and $A=R_i$;
		
\item\label{th1-3} $d=3$ and $A=R_i\cup R_3$;
		
\item\label{th1-2} $d=3$, $A=R_i$, and $\mathfrak{X}$ is neither a $P$-polynomial association scheme nor a wedge product of subschemes $(\langle R_1\rangle(x),\{R_h\cap([\langle R_1\rangle(x)]\times [\langle R_1\rangle(x)])\}_{h=0}^2)$ for each $x\in X$ and a $1$-class association scheme;
		
\item\label{th1-4} $d=4$, $A=R_i\cup R_j$, and $\mathfrak{X}$ is neither a wreath product of a $1$-class association scheme and a $P$-polynomial association scheme nor a wedge product of subschemes $(\langle R_1,R_j\rangle(x),\{R_h\cap([\langle R_1,R_j\rangle(x)]\times [\langle R_1,R_j\rangle(x)])\}_{h\in \{0,1,2,j\}})$ for each $x\in X$ and a $1$-class association scheme.
\end{enumerate}
Here, $i\in\{1,2\}$ and $j\in\{3,4\}$.
\end{thm}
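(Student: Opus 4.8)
The plan is to prove both directions of the equivalence, organizing the argument around the structure of the "first-distance" relation $A$ and the interaction between $\partial$ and $\tilde\partial$ for a weakly distance-regular digraph of diameter $2$.

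For the forward direction, suppose $(X,A)$ is weakly distance-regular of diameter $2$ with attached scheme $\mathfrak X$. First I would observe that $\tilde\partial(\Gamma)$ must contain at least one pair of the form $(1,2)$ or $(2,1)$ (otherwise $A$ would be symmetric and the diameter-$2$ attached scheme would just be an undirected strongly regular graph, which is a degenerate sub-case one still has to track carefully — in fact this is exactly the source of the "$i\in\{1,2\}$" freedom). The key structural fact to extract is that $A$ is a union of relations $R_l$ of $\mathfrak X$, that $\partial_\Gamma$ takes only the values $0,1,2$, and that the relations $\Gamma_{\tilde i}$ refine the $R_l$; since $\mathfrak X$ is given as the attached scheme, the $\Gamma_{\tilde i}$ are exactly the $R_l$. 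From the possible two-way distances $\{(0,0),(1,1),(1,2),(2,1),(2,2)\}$ one reads off $d\le 4$, and a case analysis on which of these actually occur — equivalently, on whether $A$ contains one or two of the non-diagonal $R_l$, and whether the edge-relation $A\cap A^{\top}$ is empty, equal to one $R_l$, or a union — yields the four cases (i)–(iv) with $d=2,3,3,4$ respectively. The exclusions in (iii) and (iv) come from noting that if $\mathfrak X$ *were* a $P$-polynomial scheme (case $d=3$) or the stated wreath product (case $d=4$), then the relation $A=R_i$ (resp. $R_i\cup R_j$) would have diameter exceeding $2$ or would fail strong connectivity in the required way; the wedge-product exclusions rule out the situation where $A$ is "locally a lexicographic blow-up" and hence has diameter $1$ on each fibre, forcing diameter $\ne 2$ globally. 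I would verify each exclusion by computing $\partial_\Gamma$ directly from the intersection numbers in the excluded structures.

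For the converse, I would take $\mathfrak X$ with a unique non-symmetric pair $R_1^{\top}=R_2$ satisfying one of (i)–(iv), set $A$ as prescribed, and check that $(X,A)$ is strongly connected of diameter exactly $2$ and that $\tilde\partial_\Gamma$ realizes precisely the relations $R_l$. Strong connectivity follows because $A$ generates the whole scheme (the closed subset $\langle A\rangle$ contains $R_1,R_2$ hence, by the listed cases, all $R_l$), and the diameter is $\le 2$ because $p^{h}_{i,j}\ne 0$ for the appropriate $i,j\in\{1,2\}$ whenever $(x,y)\in R_h$ with $h\notin\{0,1,2\}$ — this is where the $P$-polynomial/wreath/wedge exclusions are used in reverse, to guarantee $p^{3}_{*,*}$ and $p^{4}_{*,*}$ are nonzero along paths of length $2$. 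Computing $\tilde\partial_\Gamma$ on each $R_l$ is then a finite check: for each $l$, pick $(x,y)\in R_l$, determine $\partial_\Gamma(x,y)$ and $\partial_\Gamma(y,x)$ from the sign pattern of the relevant intersection numbers, and confirm the resulting two-way distance is the same for all of $R_l$ (which is automatic once we know $\partial_\Gamma$ is a union of relations). Finally one must confirm $\mathfrak X(\Gamma)=\mathfrak X$, i.e. that no two relations $R_l$ get merged — but distinct $R_l$ cannot share a two-way distance since a single $R_l$ already carries a well-defined pair $(\partial_\Gamma,\partial_\Gamma^{\top})$ and there are at most $d+1$ such pairs available, matching $d+1$ relations by construction.

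The main obstacle is the precise handling of the degenerate overlaps between the four cases and the exact role of the wedge-product exclusions. Concretely: in case (iii) ($d=3$, $A=R_i$) I must show that ruling out *both* $P$-polynomiality *and* the specific wedge product is *exactly* what is needed — neither condition alone suffices, and I expect the hardest lemma to be that whenever $\mathfrak X$ is a $3$-class scheme with one non-symmetric pair that is not a wedge product over the sets $\langle R_1\rangle(x)$, the digraph $(X,R_1)$ has $\partial_\Gamma(x,y)=2$ for every $(x,y)\in R_3$, which amounts to showing $p^{3}_{1,1}>0$; the wedge-product structure is precisely the obstruction to this. Similarly, case (iv) requires identifying the wreath product $\mathbf 1\wr\mathcal P$ as the unique $4$-class obstruction to $A=R_i\cup R_j$ having diameter $2$. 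I would isolate these as sublemmas about when $p^{h}_{i,i}=0$ forces a recognizable product decomposition, proved by a direct combinatorial/linear-algebraic argument on the adjacency algebra, and I anticipate the wedge-product identification (showing the vanishing of certain intersection numbers *characterizes* the wedge product, not merely follows from it) to be the most delicate point.
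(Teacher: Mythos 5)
Your skeleton agrees with the paper's: the necessity direction follows from the facts that $A$ is a union of relations of the attached scheme and that $\tilde{\partial}(\Gamma)\subseteq\{(0,0),(1,2),(2,1),(1,1),(2,2)\}$, so $d\le 4$ and the scheme is commutative, with the exclusions in (iii) and (iv) checked directly (though note the paper's reasons: $P$-polynomiality or the wreath product would force diameter at least $3$ by Damerell's theorem, while the wedge product makes $\langle R_1\rangle$, resp.\ $\langle R_1,R_j\rangle$, a proper closed subset, so $(X,A)$ is not strongly connected --- not ``diameter $1$ on each fibre''; also the $i\in\{1,2\}$ freedom comes from the converse digraph $(X,(A\Gamma)^{\top})$ having the same attached scheme, not from any symmetric degenerate case). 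The genuine gap is in the sufficiency direction, which you correctly identify as the crux but leave as announced sublemmas: you assert that the exclusions ``used in reverse'' yield the nonvanishing of the intersection numbers needed for diameter $2$, but you give no mechanism for this, and that mechanism is essentially the whole content of the paper. Concretely, the paper argues by contradiction from $a>2$ or $b>2$, and the recognition of the excluded structures rests on two tools absent from your plan: (1) systematic manipulation of the identities in Lemma \ref{jb}, producing for instance, in the $d=3$ case, the factorization $(p_{1,1}^{1}-p_{1,1}^{1^{*}}+1)(p_{1,1}^{1}-p_{1,1^{*}}^{3}+p_{1,1}^{1^{*}})=0$; and (2) the quotient construction of Lemma \ref{lem}: when $p_{1,1^{*}}^{a}=k_1$ one gets $\langle R_a\rangle=\{R_0,R_a\}$, the digraph is a lexicographic product over $\Delta/\langle R_a\rangle$, and Damerell's short/long classification then identifies the $P$-polynomial or wreath-product structure.

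Moreover, the sublemma you anticipate is not the correct statement. You expect that the wedge product is ``precisely the obstruction'' to $p_{1,1}^{3}>0$ in case (iii); in fact $p_{1,1}^{3}=0$ (with $p_{1,1}^{1^{*}}\neq 0$, which is guaranteed by Lemma \ref{lem3.5}) leads to a genuine dichotomy: either $p_{1,1^{*}}^{3}=k_1$, in which case $\langle R_3\rangle=\{R_0,R_3\}$ and the quotient argument shows $(X,R_1)$ is a long distance-regular digraph of girth $3$, i.e.\ $\mathfrak{X}$ is $P$-polynomial; or $p_{1,1}^{1^{*}}=p_{1,1}^{1}+1$, which forces $p_{1,1^{*}}^{3}=0$, $\langle R_1\rangle=\{R_0,R_1,R_2\}$, and the wedge-product structure. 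So both excluded structures occur as obstructions within the single vanishing pattern, and a proof built on ``$p_{1,1}^{3}=0$ iff wedge product'' would fail. The analogous dichotomy in the $d=4$ case (splitting on whether $p_{1,1^{*}}^{4}$ vanishes, yielding the wedge product or, via $\langle R_3\rangle=\{R_0,R_3\}$ and the quotient, the wreath product with a $P$-polynomial scheme) is likewise missing. Until these case analyses, or equivalents, are carried out, the converse direction of your proposal is a restatement of what must be proved rather than a proof.
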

The remaining of this paper is organized as follows. In Section 2, we provide the required concepts and notations about association schemes. In Section 3, we give a proof of Theorem \ref{th1}.

\section{Association schemes}
In this section, we always assume that $\mathfrak{X}:=(X,R)$ is an association scheme, where $R=\{R_{i}\}_{i=0}^{d}$. In the following, we present some basic concepts, notations and results concerning association schemes.

First we list basic properties of intersection numbers which are used frequently in the remainder of this paper.

\begin{lemma}\label{jb}
{\rm (\cite[Proposition 5.1]{ZA99} and \cite[Chapter \Rmnum{2}, Proposition 2.2]{EB84})} The following hold:
\begin{enumerate}
\item\label{jb-1} $k_{i}k_{j}=\sum^{d}_{h=0}p_{i,j}^{h}k_{h}$;
	
\item\label{jb-2}  $p^{h}_{i,j}k_{h}=p^{i}_{h,j^{*}}k_{i}=p^{j}_{i^{*},h}k_{j}$;
		
\item\label{jb-3}  $\sum^{d}_{j=0}p^{h}_{i,j}=k_{i}$;
		
\item\label{jb-4} $\sum^{d}_{r=0}p^{r}_{e,l}p^{h}_{m,r}=\sum^{d}_{t=0}p^{t}_{m,e}p^{h}_{t,l}$.
\end{enumerate}
\end{lemma}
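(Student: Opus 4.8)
The plan is to pass to the Bose--Mesner formalism and read off all four identities from a single multiplication rule for adjacency matrices. For $0\le i\le d$ let $A_i$ be the $X\times X$ integer matrix with $(A_i)_{x,y}=1$ if $(x,y)\in R_i$ and $0$ otherwise. The axioms translate into $A_0=I$, $\sum_{i=0}^{d}A_i=J$ (the all-ones matrix), and $A_i^{\top}=A_{i^*}$. The content of axiom (iv) is precisely the multiplication rule
\[
A_iA_j=\sum_{h=0}^{d}p_{i,j}^{h}A_h,
\]
since for $(x,y)\in R_h$ one has $(A_iA_j)_{x,y}=|\{z:(x,z)\in R_i,\ (z,y)\in R_j\}|=|R_i(x)\cap R_{j^*}(y)|=p_{i,j}^{h}$. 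Because distinct $A_h$ have disjoint supports they are linearly independent, so I may compare coefficients of the $A_h$ freely. Finally $k_i$ is the constant row sum of $A_i$, whence $A_i\mathbf{1}=k_i\mathbf{1}$ and $A_iJ=k_iJ$, where $\mathbf{1}$ is the all-ones vector.

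With this dictionary, (iii) and (i) are immediate. For (iii) I would sum the multiplication rule over $j$: the left side is $A_i\sum_j A_j=A_iJ=k_iJ=\sum_h k_iA_h$, while the right side is $\sum_h\bigl(\sum_j p_{i,j}^{h}\bigr)A_h$, and matching coefficients gives $\sum_j p_{i,j}^{h}=k_i$ (one even sees directly that, for fixed $(x,y)\in R_h$, the $k_i$ out-neighbours $z\in R_i(x)$ distribute among the relations $R_j$ through the pair $(z,y)$, independently of $h$). For (i) I apply both sides of the rule to $\mathbf{1}$: the left side gives $A_iA_j\mathbf{1}=k_jA_i\mathbf{1}=k_ik_j\mathbf{1}$, and the right side gives $\bigl(\sum_h p_{i,j}^{h}k_h\bigr)\mathbf{1}$. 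Identity (iv) is nothing but associativity: expanding $(A_mA_e)A_l$ and $A_m(A_eA_l)$ with the multiplication rule and equating the coefficient of each $A_h$ yields $\sum_t p_{m,e}^{t}p_{t,l}^{h}=\sum_r p_{e,l}^{r}p_{m,r}^{h}$.

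The only step needing genuine care is the three-term identity (ii), and this is where I expect the bookkeeping of the starred indices to be the main obstacle. I would introduce the bilinear form $\langle M,N\rangle:=\operatorname{tr}(MN^{\top})=\sum_{x,y}M_{x,y}N_{x,y}$ and evaluate $\langle A_iA_j,A_h\rangle$ in three ways. Directly, summing $(A_iA_j)_{x,y}=p_{i,j}^{h}$ over the $|R_h|=|X|k_h$ pairs in $R_h$ gives $|X|\,k_h\,p_{i,j}^{h}$. Shifting $A_i$ across the form, $\langle A_iA_j,A_h\rangle=\langle A_j,A_i^{\top}A_h\rangle=\langle A_j,A_{i^*}A_h\rangle$, which equals $|X|\,k_j\,p_{i^*,h}^{j}$; shifting $A_j$ instead, $\langle A_iA_j,A_h\rangle=\langle A_i,A_hA_j^{\top}\rangle=\langle A_i,A_hA_{j^*}\rangle=|X|\,k_i\,p_{h,j^*}^{i}$. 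Cancelling $|X|$ yields $p_{i,j}^{h}k_h=p_{h,j^*}^{i}k_i=p_{i^*,h}^{j}k_j$. The delicate points are using $A_a^{\top}=A_{a^*}$ with the correct index, and confirming via cyclicity of the trace that $\langle AB,C\rangle=\langle B,A^{\top}C\rangle=\langle A,CB^{\top}\rangle$, so that each reassociation lands the remaining product on a relation whose pairs are counted by the intended intersection number. As a cross-check I would mention the purely combinatorial alternative, which counts the triples $(x,z,y)$ satisfying $(x,z)\in R_i$, $(z,y)\in R_j$, $(x,y)\in R_h$ by fixing each of the three pairs in turn.
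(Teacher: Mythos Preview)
Your argument is correct in every part: the multiplication rule $A_iA_j=\sum_h p_{i,j}^hA_h$ together with $A_iJ=k_iJ$, associativity, and the trace pairing $\langle M,N\rangle=\operatorname{tr}(MN^\top)$ yields all four identities exactly as you describe, and your handling of the starred indices in (ii) is accurate.

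The only point of comparison is that the paper does not actually prove this lemma at all: it is stated as a quotation of \cite[Proposition~5.1]{ZA99} and \cite[Chapter~II, Proposition~2.2]{EB84}, with no argument given. So you have supplied a complete, self-contained proof where the paper relies entirely on the literature. Your Bose--Mesner approach is the standard one found in those references (and essentially the one Bannai--Ito use), so in spirit there is no divergence---you have simply written out what the citation points to.
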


The \emph{adjacency matrix} $A_{i}$ of $R_{i}$ is the $|X|\times |X|$ matrix whose $(x,y)$-entry is $1$ if $(x,y)\in R_{i}$, and $0$ otherwise. By the \emph{adjacency} or \emph{Bose-Mesner algebra} $\mathfrak{U}$ of $\mathfrak{X}$ we mean the algebra generated by $A_0, A_1, \ldots, A_d$ over the complex field. Axioms \ref{1}--\ref{4} are equivalent to the following:
\[
A_0 = I_{|X|}, \quad \sum_{i=0}^d A_i = J_{|X|}, \quad A_i^\top = A_{i^*}, \quad A_i A_j = \sum_{h=0}^d p_{i,j}^h A_h,
\]
where $I_{|X|}$ and $J_{|X|}$ are the identity and all-one matrices of order $|X|$, respectively.

We say that $\mathfrak{X}$ is a \emph{$P$-polynomial} association scheme with respect to the ordering $R_0,R_1,\ldots,R_d$, if there exist some complex coefficient polynomials $v_{i}(x)$ of degree $i$ ($0\leq i\leq d$) such that $A_i=v_{i}(A_1).$

Distance-regular graphs are equivalent to symmetric association schemes which are $P$-polynomial. As a directed version of distance-regular graph, Damerell \cite{RMD81} introduced the concept of distance-regular digraphs. A digraph $\Gamma$ of diameter $d$ is said to be \emph{distance-regular} if $(V\Gamma,\{\Gamma_{i}\}_{0\leq i\leq d})$ is a non-symmetric association scheme. Notice that, for a distance-regular digraph $\Gamma$, $(V\Gamma,\{\Gamma_{i}\}_{0\leq i\leq d})$ is a $P$-polynomial association scheme with respect to the ordering $\Gamma_0,\Gamma_1,\ldots,\Gamma_d$. On the contrary, every $P$-polynomial non-symmetric association scheme arises from a distance-regular digraph in this way.

In \cite{RMD81}, Damerell  presented the following basic results concerning distance-regular digraphs.

\begin{thm}\label{drdg}
{\rm (\cite[Theorems 2 and 4]{RMD81})} Let $\Gamma$ be a distance-regular digraph of diameter $d$ and girth $g$. Then $d=g-1$ (short type) or $d=g$ (long type). Moreover, if $d=g$, then $\Gamma$ is a lexicographic product of a short distance-regular digraph of diameter $g-1$ and an empty graph.
\end{thm}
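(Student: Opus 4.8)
\emph{Overall strategy.} I would split the statement into two parts: the dichotomy $d\in\{g-1,g\}$, and the lexicographic description of the long type. Throughout I use that, as a distance-regular digraph, $\Gamma$ is out-regular of valency $k=k_1$ and that its attached scheme is $P$-polynomial, hence commutative, with $A_i=v_i(A_1)$ for every $i$ (as recalled before Theorem~\ref{drdg}).

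\emph{Step 1: the identity $1^{*}=g-1$ and the bound $d\ge g-1$.} For any arc $(x,y)\in\Gamma_1$, every path from $y$ to $x$ closes up with $(x,y)$ into a circuit, so it has length at least $g-1$; thus $\partial_\Gamma(y,x)\ge g-1$, which already gives $d\ge g-1$. By distance-regularity the value $\partial_\Gamma(y,x)$ is one and the same constant for all arcs $(x,y)$, and since a shortest circuit realizes a path of length $g-1$ from $w_1$ back to $w_0$ along some arc $(w_0,w_1)$, this constant equals $g-1$. Hence $\Gamma_1^{\top}=\Gamma_{g-1}$, that is $1^{*}=g-1$ and $A_1^{\top}=A_{g-1}=v_{g-1}(A_1)$.

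\emph{Step 2: the upper bound $d\le g$ (the main obstacle).} The identity $A_1^{\top}=v_{g-1}(A_1)$ shows that $A_1$ commutes with its transpose, so $A_1$ is normal; being the generator of the Bose--Mesner algebra it has exactly $d+1$ distinct eigenvalues $\theta_0=k,\theta_1,\ldots,\theta_d$, each satisfying $|\theta_s|\le k$ and $\overline{\theta_s}=v_{g-1}(\theta_s)$. Because the girth is $g$, every closed walk of length $j$ with $1\le j\le g-1$ is trivial, whence $\mathrm{tr}(A_1^{j})=\sum_s m_s\theta_s^{j}=0$ for $1\le j\le g-1$. I would then combine these vanishing power sums with the conjugation relation $\overline{\theta_s}=v_{g-1}(\theta_s)$ and the feasibility constraints $\sum_j v_j(\theta_s)=0$ and $|v_j(\theta_s)|\le k_j$ (coming from $\sum_j A_j=J_{|X|}$ and the nonnegativity of the $A_j$ on eigenvectors orthogonal to $\mathbf{1}$) to force each nonprincipal eigenvalue to be a common positive multiple of a $g$-th root of unity. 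This leaves at most $g+1$ distinct eigenvalues, so $d+1\le g+1$ and $d\le g$. This eigenvalue-rigidity step is the real difficulty: vanishing power sums alone are far too weak, and one must genuinely use the scheme structure (the polynomial transpose relation together with the integrality and nonnegativity of the intersection numbers) to pin the spectrum onto a single circle of equally spaced points.

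\emph{Step 3: short versus long, and the lexicographic product.} If $d=g-1$ we are in the \emph{short} case and nothing further is needed. If $d=g$, the extra distinct eigenvalue beyond the scaled $g$-th roots of unity must be $0$, and I would show that $0\in\mathrm{spec}(A_1)$ corresponds precisely to $\Gamma_g\ne\emptyset$ and that $\Gamma_g$ acts as a parallel class. Concretely, the two-way distance of every non-diagonal pair is either $(i,g-i)$ with $1\le i\le g-1$ or $(g,g)$; in particular $\Gamma_g$ is symmetric, and $R:=\Gamma_0\cup\Gamma_g$ is an equivalence relation with $\{A_0,A_g\}$ spanning a subalgebra (so that $A_g^{2}\in\langle A_0,A_g\rangle$ and $A_1A_g,\,A_gA_1\in\langle A_1\rangle$). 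Verifying transitivity of $R$ is where the coincidence $d=g$ is essential, and is the second obstacle. Once $R$ is an equivalence relation, its classes (the fibres) all have common size $m=k_g+1$ and are independent sets, since their pairs lie in $\Gamma_g$ rather than in $\Gamma_1$; distance-regularity then makes the arcs between two distinct fibres complete in one direction, so $\Gamma=\bar\Gamma[\overline{K}_m]$ is the lexicographic product of the quotient digraph $\bar\Gamma$ on the fibres with the empty graph $\overline{K}_m$ on $m$ vertices. A within-fibre return of length $g$ projects to a $g$-circuit in $\bar\Gamma$ and cross-fibre distances project isometrically, so $\bar\Gamma$ is again distance-regular of girth $g$ and diameter $g-1$, i.e.\ a short distance-regular digraph, which completes the long-type description asserted in Theorem~\ref{drdg}.
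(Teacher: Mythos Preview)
The paper does not prove Theorem~\ref{drdg}; it merely quotes Damerell's Theorems~2 and~4 from \cite{RMD81} as background, so there is no in-paper argument against which to compare your proposal.

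As for the proposal itself, the outline mirrors Damerell's strategy, but two of the steps you yourself flag as obstacles remain genuine gaps rather than proofs. In Step~2 you assert that the vanishing power sums $\mathrm{tr}(A_1^{j})=0$ for $1\le j\le g-1$, together with the conjugation relation $\overline{\theta_s}=v_{g-1}(\theta_s)$ and the feasibility bounds, force the nonprincipal spectrum onto a single circle of $g$-th roots of unity; but you do not actually carry this out, and the power-sum conditions alone (which are just $g-1$ linear relations among the $\theta_s^{j}$ weighted by multiplicities) are very far from determining the $\theta_s$ individually. Damerell's route is more combinatorial: from girth $g$ one reads off directly that $A_1^{j}$ is supported on $A_0,\ldots,A_j$ with a nonzero $A_j$-coefficient for $j\le g-1$, and then the transpose identity $A_1^{\top}=A_{g-1}$ yields an explicit minimal polynomial for $A_1$ of degree at most $g+1$, giving $d\le g$ without any appeal to ``spectral rigidity.'' Your sketch would need that explicit polynomial identity (or an equivalent) to close the gap.

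In Step~3 the same issue recurs: you note correctly that transitivity of $R=\Gamma_0\cup\Gamma_g$ is ``where the coincidence $d=g$ is essential,'' but you do not verify it. What actually drives this in Damerell's argument is the intersection-number computation showing $p_{g,g}^{h}=0$ for $1\le h\le g-1$ (equivalently $A_g^{2}\in\langle A_0,A_g\rangle$), which follows from the tridiagonal-type recursions of the $P$-polynomial scheme once $d=g$ and $1^{*}=g-1$ are in hand. Stating that $\{A_0,A_g\}$ ``spans a subalgebra'' is exactly the content to be proved, not an input. Once that closure is established, your description of the fibres and the lexicographic factorisation is correct.
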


Let $E,F$ be two nonempty subsets of $R$. Define
$$EF:=\{R_{h}\mid\underset{R_i\in E, R_j\in F}{\sum}p_{i,j}^{h}\neq0\}.$$
We write $R_iR_j$ instead of $\{R_i\}\{R_j\}$, and $R^{2}_i$ instead of $\{R_i\}\{R_i\}$. If $R_{i^*}R_j\subseteq F$ for any $R_i, R_j\in F$, we say that $F$ is \emph{closed}. For $x\in X$, set $E(x):=\{y\in X\mid (x,y)\in \bigcup_{e\in E}e\}$.

\begin{thm} \label{th2.3}
Let $x\in X$ and $F$ be a closed subset of $R$. Then $(F(x),\{f\cap([F(x)]\times [F(x)])\}_{f\in F})$ is an association scheme.
\end{thm}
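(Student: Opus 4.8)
The plan is to verify the four association scheme axioms for the pair $\mathfrak{Y}:=(F(x),\{f\cap([F(x)]\times[F(x)])\}_{f\in F})$ directly, writing $\bar f:=f\cap([F(x)]\times[F(x)])$ for $f\in F$. First I would establish the underlying combinatorial fact that makes everything work: for any $y\in F(x)$ and any $R_i\in F$, the whole out-neighbourhood $R_i(y)$ is contained in $F(x)$. Indeed, if $y\in F(x)$ then $(x,y)\in f_0$ for some $f_0\in F$, and if $z\in R_i(y)$ then $(x,z)$ lies in some relation $R_h$ with $p_{f_0,i}^{h}\neq 0$ (up to the usual $*$-bookkeeping), hence $R_h\in f_0R_i\subseteq F$ because $F$ is closed; therefore $z\in F(x)$. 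Consequently, for $y\in F(x)$ and $R_i\in F$ we have $\bar R_i(y)=R_i(y)$ exactly, and in particular $|\bar R_i(y)|=k_i$ is independent of $y$.

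With this in hand the axioms are almost immediate. Axiom (i): $\bar R_0=R_0\cap([F(x)]\times[F(x)])$ is the diagonal of $F(x)$. Axiom (ii): since every $y\in F(x)$ has $(x,y)\in f$ for a unique $f\in F$, and the $R_i$ partition $X\times X$, the $\bar f$ with $f\in F$ partition $[F(x)]\times[F(x)]$ (one must check that if $(y,z)\in f$ with $y,z\in F(x)$ then $f\in F$, which follows from the neighbourhood-closure fact applied at $y$). Axiom (iii): $F$ is closed, so in particular $R_{i^*}\in F$ whenever $R_i\in F$ (taking $R_j=R_i$ in the closure condition, or noting $R_0\in F$); then $\bar R_i^{\top}=\bar R_{i^*}$ because transposition commutes with intersecting by the symmetric set $[F(x)]\times[F(x)]$. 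Axiom (iv) is the main point: for $(y,z)\in\bar R_h$ we need $|\bar R_i(y)\cap \bar R_{j^*}(z)|$ to be constant. But by the neighbourhood fact, $\bar R_i(y)=R_i(y)\subseteq F(x)$ and $\bar R_{j^*}(z)=R_{j^*}(z)\subseteq F(x)$, so this cardinality equals $|R_i(y)\cap R_{j^*}(z)|=p_{i,j}^{h}$, the intersection number of $\mathfrak{X}$, which is constant. Hence $\mathfrak{Y}$ is an association scheme (with the same intersection numbers $p_{i,j}^{h}$ for those indices whose relations lie in $F$).

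The one step requiring genuine care — and the only real obstacle — is the neighbourhood-closure claim, because the definition of $EF$ involves $\sum_{R_i\in E,R_j\in F}p_{i,j}^{h}\neq 0$ with the $*$-conventions, and one must line up the quantities $P_{i,j}(x,y)$, the products $A_iA_j$, and which index ($j$ versus $j^*$) appears, so that "$(x,y)\in f$ and $(y,z)\in R_i$ forces $(x,z)\in R_h$ with $R_h\in fR_i$" is stated with the correct starred indices. Once that bookkeeping is pinned down (it is cleanest to argue via the Bose–Mesner identity $A_fA_i=\sum_h p_{f,i}^{h}A_h$, reading off that a nonzero $(x,z)$-entry of $A_fA_i$ forces $p_{f,i}^{h}\neq 0$ for the relation $R_h$ containing $(x,z)$), everything else is routine, and the verification of axiom (iv) is where closure of $F$ is used decisively.
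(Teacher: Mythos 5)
Your proof is correct, but note that it takes a genuinely different route from the paper: the paper disposes of Theorem~\ref{th2.3} by citing \cite[Theorem 1.5.1 (ii)]{PHZ96}, whereas you give a self-contained verification of the four axioms, which is a reasonable elementary alternative. Your central neighbourhood-closure claim is sound once the $*$-bookkeeping is done as you indicate: if $(x,y)\in f_0\in F$ and $(y,z)\in R_i\in F$, then the relation $R_h$ containing $(x,z)$ has $p_{f_0,i}^{h}\neq 0$, and since taking $R_j=R_i$ in the closure condition gives $R_0\in F$ (as $p_{i^*,i}^{0}=k_{i^*}\neq 0$) and then $R_{f_0^*}\in R_{f_0^*}R_0\subseteq F$, closure applied to $R_{f_0^*},R_i\in F$ yields $R_{f_0}R_i\subseteq F$, hence $R_h\in F$; this gives $R_i(y)\subseteq F(x)$ and, as you say, axiom (iv) with the inherited intersection numbers $p_{i,j}^{h}$. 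The one place to tighten is the covering part of axiom (ii): the statement that $y,z\in F(x)$ and $(y,z)\in R_m$ force $R_m\in F$ does not literally follow from the neighbourhood fact ``applied at $y$,'' since that fact only yields $F(y)\subseteq F(x)$. Either argue directly that $x\in R_{m}(y)\cap\,R_{f_2^*}(z)$ is impossible to avoid stating carefully, namely $x$ lies in $R_{f_1^*}(y)\cap R_{f_2^*}(z)$ where $(x,y)\in f_1$ and $(x,z)\in f_2$, so $p_{f_1^*,f_2}^{m}\neq 0$ and $R_m\in R_{f_1^*}R_{f_2}\subseteq F$ (this uses the closure condition exactly as the paper states it), or combine $F(y)\subseteq F(x)$ with the count $|F(w)|=\sum_{R_i\in F}k_i$ for every $w$ to conclude $F(y)=F(x)$. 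With that one-line repair, plus the immediate remark that each restricted relation is nonempty (because $x\in F(x)$ and $R_i(x)\subseteq F(x)$ with $k_i\geq 1$), your argument is complete and proves exactly what the cited theorem of Zieschang provides.
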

\begin{proof}
	This follows from \cite[Theorem 1.5.1 (ii)]{PHZ96}.
\end{proof}
Let $x\in X$ and $F$ be a closed subset of $R$. Then $(F(x),\{f\cap([F(x)]\times [F(x)])\}_{f\in F})$ is called a \emph{subscheme} of $\mathfrak{X}$. Let $X/F=\{ F(x) \mid x\in X\}$, $R_i^F=\{(F(x),F(y))\mid y\in FR_iF(x)\}$  for $0\leq i\leq d$ and $R \sslash F=\{R_i^F\mid R_i\in R\}$. By \cite[Theorem 4.1.3 (i)]{PHZ05}, the pair $(X/F, R \sslash F)$ forms an association scheme, called the \emph{quotient scheme} of $\mathfrak{X}$ over $F$.

Now we recall the definition of a wreath product of two association schemes. This notion appears in the monograph \cite{We76} by Weisfeiler, in a more general context of coherent configurations. Let $\mathfrak{Y}=(Y,\{S_j\}_{j=0}^e)$ be an association scheme. Then the \emph{wreath product} of $\mathfrak{X}$ and $\mathfrak{Y}$ denoted by $\mathfrak{X}  \leftthreetimes \mathfrak{Y}$ is defined on the set $X\times Y$ with the relations $W_0, W_1,\ldots, W_{d+e}$ satisfying the following:
\begin{align}
W_0&=\{((x,y),(x,y))\mid (x,y)\in X\times Y\};\nonumber\\
W_k&=\{((x_1,y),(x_2,y))\mid (x_1,x_2)\in R_k, y\in Y\},~\mbox{for}~ 1\leq k\leq d; \nonumber\\
W_k&=\{((x_1,y_1),(x_2,y_2))\mid x_1,x_2\in X, (y_1,y_2)\in S_{k-d}\}, ~\mbox{for}~ d+1\leq k\leq d+e.\nonumber
\end{align}
Then, it is easy to see that $\mathfrak{X}  \leftthreetimes \mathfrak{Y}=(X\times Y,\{W_k\}_{k=0}^{d+e})$ is an association scheme.

As a generalization of the wreath product, the wedge  product (generalized wreath product) of schemes has been defined by Muzychuk \cite{M} as a generalization of the wedge product of Schur rings. By \cite[Theorem 2.3]{M}, we have the following equivalent definition.

Let $K\subseteq F$ be two closed subsets of $R$ such that:
\begin{itemize}
\item [{\rm(a)}] $\sum_{R_i\in K}A_iA_j=\sum_{R_i\in K}k_iA_j=\sum_{R_i\in K}A_jA_i$, for every $R_j\in R\backslash F$;

\item [{\rm(b)}] $KR_i=R_iK$ for all $0\leq i\leq d$.
\end{itemize}
Then $\mathfrak{X}$ is the \emph{wedge  product} of subschemes $(F(x),\{f\cap([F(x)]\times [F(x)])\}_{f\in F})$ for each $x\in X$ and the quotient scheme $(X/K, R \sslash K)$.

Let $F$ be a nonempty closed subset of $R$. For a digraph $\Gamma$ with $X$ as its vertex set, the \emph{quotient digraph} of $\Gamma$ over $F$, denoted by $\Gamma/F$, is defined as the digraph with the vertex set $X/F$ in which $( F(x),F(y))$ is an arc whenever there is an arc in $\Gamma$ from $F(x)$ to $F(y)$.

Let $K$ be a nonempty subset of $R$. We define $\langle K\rangle$ to be the smallest closed subset containing $K$. If $K=\{R_{i_1},R_{i_2},\ldots,R_{i_l}\}$ for $1\leq l\leq d+1$, we write $\langle R_{i_1},R_{i_2},\ldots,R_{i_l}\rangle$ instead of $\langle \{R_{i_1},R_{i_2},\ldots,R_{i_l}\} \rangle$. We close this section with a result concerning weakly distance-regular digraphs.

\begin{lemma} \label{lem}
Let $\mathfrak{X}$ be a commutative association scheme such that $R_1^{\top}=R_2$ and $\langle R_a\rangle=\{R_0,R_a\}$ with $3\leq a\leq d$. Suppose $p_{1,1^{*}}^{a}=k_1$. Assume $\Delta_l=(X,R_1\cup R_l)$ for $1\leq l\leq d$. The following hold:
\begin{enumerate}
\item\label{lem-1} $\Delta_1$ is the lexicographic product of $\Delta_1/\langle R_a\rangle$ and the empty graph $\overline{K}_{k_a+1}$;
		
\item\label{lem-2} $\Delta_a$ is the lexicographic product of $\Delta_a/\langle R_a\rangle$ and the complete graph $K_{k_a+1}$.
\end{enumerate}
Moreover, if $\Delta_l$ is a weakly distance-regular digraph with $\mathfrak{X}$ as its attached scheme for some $l\in\{1,a\}$, then $\Delta_l/\langle R_a\rangle$ is also weakly distance-regular.
\end{lemma}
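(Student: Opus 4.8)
The plan is to extract the structural content of the hypothesis $p_{1,1^{*}}^{a}=k_1$, deduce (i) and (ii) quickly, and then reduce the ``moreover'' clause to the fact (quoted just before the lemma) that the quotient scheme over $\langle R_a\rangle$ is an association scheme. So first I would unpack $p_{1,1^{*}}^{a}=k_1$. Since $\langle R_a\rangle=\{R_0,R_a\}$ is closed with $a\geq 3$, the relation $R_a$ is symmetric and the sets $\langle R_a\rangle(x)=\{x\}\cup R_a(x)$ are the classes of the equivalence relation $R_0\cup R_a$, each of cardinality $k_a+1$. As $1^{*}=2$, for $(x,y)\in R_a$ we have $|R_1(x)\cap R_1(y)|=p_{1,1^{*}}^{a}=k_1=|R_1(x)|$, so $R_1(x)=R_1(y)$; by commutativity and $k_1=k_2$ the same computation yields $R_2(x)=R_2(y)$. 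Combined with $(x,z)\in R_1\iff x\in R_2(z)$ and the symmetry of $R_a$, this shows that $R_1(x)$ depends only on the class of $x$, is a union of $\langle R_a\rangle$-classes, and is disjoint from $\langle R_a\rangle(x)$ (as $R_1\cap(R_0\cup R_a)=\emptyset$).

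Parts (i) and (ii) then fall out. Identifying $X$ with $(X/\langle R_a\rangle)\times[k_a+1]$ compatibly with the projection $\pi$, the previous paragraph says that $\Delta_1=(X,R_1)$ has no arc inside a class and that whether $x\to y$ is an arc depends only on $(\pi(x),\pi(y))$, with $\pi(x)\neq\pi(y)$ forced; since an arc of $\Delta_1/\langle R_a\rangle$ from $\pi(x)$ to $\pi(y)$ exists iff $(x,y)\in R_1$, this is precisely the assertion that $\Delta_1$ is the lexicographic product of $\Delta_1/\langle R_a\rangle$ with $\overline{K}_{k_a+1}$. For (ii), inside each class $\Delta_a=(X,R_1\cup R_a)$ induces $K_{k_a+1}$ (all ordered pairs of distinct vertices of a class lie in $R_a$) and across classes its arcs coincide with those of $\Delta_1$; hence $\Delta_a/\langle R_a\rangle=\Delta_1/\langle R_a\rangle$ and $\Delta_a$ is the lexicographic product of this digraph with $K_{k_a+1}$.

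For the ``moreover'' clause, fix $l\in\{1,a\}$, set $Q=\Delta_l/\langle R_a\rangle$, and use (i)/(ii) to write $\Delta_l$ as the lexicographic product of $Q$ with $G\in\{\overline{K}_{k_a+1},K_{k_a+1}\}$. I would first record the elementary fact that in such a product $\partial_{\Delta_l}(x,y)=\partial_{Q}(\pi(x),\pi(y))$ whenever $\pi(x)\neq\pi(y)$ — any path downstairs projects to a walk in $Q$ between the corresponding classes, and a shortest such walk lifts — so that $\widetilde{\partial}_{\Delta_l}(x,y)=\widetilde{\partial}_{Q}(\pi(x),\pi(y))$ for $\pi(x)\neq\pi(y)$. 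Assuming now that $\Delta_l$ is weakly distance-regular with attached scheme $\mathfrak{X}$, each $R_i$ is the set of pairs at one fixed two-way distance and these distances are pairwise distinct; since $\pi(x)=\pi(y)$ holds exactly for $(x,y)\in R_0\cup R_a$, every $R_i$ with $i\notin\{0,a\}$ consists of cross-class pairs, and the distance identity forces $R_i$ to be a union of ``blocks'' $\langle R_a\rangle(x)\times\langle R_a\rangle(y)$; equivalently $\langle R_a\rangle R_i\langle R_a\rangle=\{R_i\}$ for every $i$. Hence the quotient scheme $(X/\langle R_a\rangle,R\sslash\langle R_a\rangle)$ has exactly the relations: the diagonal $R_0^{\langle R_a\rangle}$ (which also equals $R_a^{\langle R_a\rangle}$) together with the pairwise distinct $R_i^{\langle R_a\rangle}=\{(\pi(x),\pi(y))\mid(x,y)\in R_i\}$, $i\notin\{0,a\}$.

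Finally, the distance identity shows that these are exactly the two-way-distance relations of $Q$ (a pair of distinct vertices of $Q$ lifts to a cross-class pair of $X$, lying in a unique $R_i$ with $i\notin\{0,a\}$, and the $\widetilde{\partial}_i$ are distinct), and $Q$ is strongly connected as a quotient of the strongly connected $\Delta_l$; therefore $Q$ is weakly distance-regular with the quotient scheme $(X/\langle R_a\rangle,R\sslash\langle R_a\rangle)$ as its attached scheme. I expect the main obstacle to be the block-structure claim in the previous paragraph: that is precisely where the lexicographic-product distance identity must be combined with the \emph{full} strength of the hypothesis that all of $R_0,\dots,R_d$ — not just the arc set $R_1$ — constitute the attached scheme of $\Delta_l$, since (i) and (ii) on their own only control $R_1$ and $R_2$.
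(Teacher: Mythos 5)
Your proposal is correct and in substance follows the paper's own route: you use the hypothesis $p_{1,1^{*}}^{a}=k_1$ exactly as the paper's opening claim does (the paper phrases it as: an arc between two $\langle R_a\rangle$-classes forces every cross pair between those classes to be an arc, while you phrase it as $R_1(x)=R_1(y)$ whenever $(x,y)\in R_a$ — these are the same statement), parts (i) and (ii) follow from this in both treatments, and your ``moreover'' argument rests on the same two ingredients as the paper's: the distance identity $\partial_{\Delta_l}(x,y)=\partial_{\Delta_l/\langle R_a\rangle}(\langle R_a\rangle(x),\langle R_a\rangle(y))$ for $y\notin\langle R_a\rangle(x)$, proved by projecting and lifting paths, and the resulting fact that each $R_i$ with $i\notin\{0,a\}$ is a union of blocks $\langle R_a\rangle(u)\times\langle R_a\rangle(v)$ (your closing remark correctly pinpoints that this step needs the full attached-scheme hypothesis, not just control of $R_1$). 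The only genuine divergence is how the final regularity check is discharged: the paper is self-contained, using the block structure to count directly that each candidate intersection number of $\Delta_l/\langle R_a\rangle$ equals $p_{i,j}^{h}/(k_a+1)$, a constant; you instead identify the two-way-distance partition of the quotient digraph with the quotient scheme $(X/\langle R_a\rangle, R\sslash\langle R_a\rangle)$ and invoke the cited theorem that quotient schemes are association schemes. Both finishes are valid; the paper's yields the quotient's intersection numbers explicitly, while yours makes explicit the (slightly stronger-sounding) conclusion that the attached scheme of $\Delta_l/\langle R_a\rangle$ is precisely the quotient scheme of $\mathfrak{X}$ over $\langle R_a\rangle$, at the cost of leaning on the quotient-scheme machinery. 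One cosmetic slip: your ``equivalently $\langle R_a\rangle R_i\langle R_a\rangle=\{R_i\}$ for every $i$'' should be restricted to $i\notin\{0,a\}$ (for $i\in\{0,a\}$ one gets $\{R_0,R_a\}$), but this does not affect your argument since you treat $R_0^{\langle R_a\rangle}=R_a^{\langle R_a\rangle}$ separately as the diagonal.
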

\begin{proof}
	
Let $l\in\{1,a\}$ and $\Lambda_l=\Delta_l/\langle R_a\rangle$. We claim that $(\langle R_a\rangle(x),\langle R_a\rangle(y))$ is an arc in $\Lambda_l$ if and only if $(x',y')$ is an arc in $\Delta_l$ for all $x'\in \langle R_a\rangle(x)$ and $y'\in\langle R_a\rangle(y)$. The sufficiency is immediate. Now suppose that $(\langle R_a\rangle(x),\langle R_a\rangle(y))$ is an arc in $\Lambda_l$. Then $y\notin \langle R_a\rangle(x)$. Without loss of generality, we may assume that $(x,y)$ is an arc in $\Delta_l$. Since $A\Delta_l=R_1\cup R_l$ with $l\in\{1,a\}$, we have $(x,y)\in R_1$. Let $y'\in \langle R_a\rangle(y)$. If $y=y'$, then $(x,y')\in R_1$; if $y\neq y'$, then $(y,y')\in R_a$ since $\langle R_a\rangle =\{R_0,R_a\}$, which implies $x\in R_{1^*}(y)=P_{1^*,1}(y,y')$ by the commutativity of $\mathfrak{X}$. Then $(x,y')\in R_1$. Let $x'\in \langle R_a\rangle(x)$. If $x'=x$, then $(x',y')\in R_1$; if $x'\neq x$, then $(x,x')\in R_a$ since $\langle R_a\rangle =\{R_0,R_a\}$, which implies $y'\in R_{1}(x)=P_{1,1^*}(x,x')$, and so $(x',y')\in R_1$. Since $x'\in \langle R_a\rangle(x)$ and $y'\in\langle R_a\rangle(y)$ were arbitrary, the necessity is valid. Thus, our claim is valid.
	
Since $\langle R_a\rangle =\{R_0,R_a\}$ and $A\Delta_l=R_1\cup R_l$, from the claim, $\Delta_1$ is the lexicographic product of $\Lambda_1$ and the empty graph $\overline{K}_{k_a+1}$, and $\Delta_a$ is the lexicographic product of $\Lambda_a$ and the complete graph $K_{k_a+1}$. Thus, \ref{lem-1} and \ref{lem-2} are both valid.
	
Now suppose that $\Delta_l$ is a weakly distance-regular digraph with $\mathfrak{X}$ as its attached scheme for some $l\in\{1,a\}$. Let $x_0,y\in V\Delta_l$ and $y\notin \langle R_a\rangle(x_0)$. Pick a shortest path $(x_0, x_1,\ldots,x_{m-1},y)$ in $\Delta_l$ with $m>0$. Since $\langle R_a\rangle =\{R_0,R_a\}$ and $A\Delta_l=R_1\cup R_l$, we have $\partial_{\Lambda_l}(\langle R_a\rangle(x_0),\langle R_a\rangle(y))\leq \partial_{\Delta_l}(x_0,y)$.
	
Pick a shortest path $(\langle R_a\rangle(x_0), \langle R_a\rangle(x_1),\ldots ,\langle R_a\rangle(x_{n-1}),\langle R_a\rangle(y))$ in $\Lambda_l$ with $n>0$. By the claim, $(x_0, x_1, \ldots,x_{n-1},y)$ is a path in $\Delta_l$. It follows that $\partial_{\Delta_l}(x_0,y)\leq \partial_{\Lambda_l}(\langle R_a\rangle(x_0),\langle R_a\rangle(y))$. Thus, $\partial_{\Delta_l}(x_0,y)=\partial_{\Lambda_l}(\langle R_a\rangle(x_0),\langle R_a\rangle(y))$.
	
Let $\tilde{i},\tilde{j},\tilde{h}\in\tilde{\partial}(\Lambda_l)\setminus\{(0,0)\}$ and $(\langle R_a\rangle(u),\langle R_a\rangle(v))\in(\Lambda_l)_{\tilde{h}}$. Since $\partial_{\Delta_l}(x,y)=\partial_{\Lambda_l}(\langle R_a\rangle(x),\langle R_a\rangle(y))$ for all $x,y\in X$ and $y\notin \langle R_a\rangle(x)$, we have $\tilde{i},\tilde{j}\in\tilde{\partial}(\Delta_l)$ and $(u,v)\in(\Delta_l)_{\tilde{h}}$. Since $\mathfrak{X}$ is the attached scheme of $\Delta_l$, we may assume $(\Delta_{l})_{\tilde{i}}=R_i$, $(\Delta_{l})_{\tilde{j}}=R_j$ and $(u,v)\in R_h$ for some  $i,j,h\in\{1,2,\ldots,a-1,a+1,\ldots,d\}$. Since $\partial_{\Delta_l}(x,y)=\partial_{\Lambda_l}(\langle R_a\rangle(x),\langle R_a\rangle(y))$ for all $x,y\in X$ and $y\notin \langle R_a\rangle(x)$, one gets
\begin{align}
R_i(u)\cap R_{j^*}(v)&=(\Delta_l)_{{\tilde{i}}}(u)\cap (\Delta_l)_{\tilde{j}^*}(v)\nonumber\\
    &=\cup_{w\in (\Delta_l)_{{\tilde{i}}}(u)\cap (\Delta_l)_{\tilde{j}^*}(v)}\langle R_a\rangle(w)\nonumber\\
	&=\cup_{\langle R_a\rangle(w)\in(\Lambda_l)_{\tilde{i}}(\langle R_a\rangle(u))\cap(\Lambda_l)_{\tilde{j}^*}(\langle R_a\rangle(v))}\langle R_a\rangle(w).\nonumber
\end{align}
Since $|\langle R_a\rangle(w)|=k_{a}+1$ for all $w\in X$, we have $$|(\Lambda_l)_{\tilde{i}}(\langle R_a\rangle(u))\cap(\Lambda_l)_{\tilde{j}^*}(\langle R_a\rangle(v))|=\frac{p_{i,j}^{h}}{k_{a}+1}.$$

Thus,  $\Lambda_l$ is also a weakly distance-regular digraph.
\end{proof}

\section{Proof of Theorem \ref{th1}}
In this section, we always assume that $\Gamma:=(X,A\Gamma)$ is a weakly distance-regular digraph and $\mathfrak{X}:=(X,\{R_i\}_{i=0}^d)$ is an association scheme with $R_1^{\top}=R_2$ and $R_{h}^{\top}=R_{h}$ for $3\leq h\leq d$. Before giving a proof of Theorem \ref{th1}, we need some auxiliary facts and lemmas.

\begin{fact}\label{sub}
If $\mathfrak{X}$ is the attached scheme of $\Gamma$, then $R_{i}\subseteq A\Gamma$ or $R_{i}\cap A\Gamma=\emptyset$ for all $1\leq i\leq d$.
\end{fact}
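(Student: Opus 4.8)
The plan is to exploit the fact that the arc set $A\Gamma$ must be describable entirely in terms of the two-way distance relations $\Gamma_{\tilde{i}}$, and that these relations refine the given relations $R_i$ only in a trivial way under the hypothesis that $\mathfrak{X}$ is the attached scheme. First I would recall that when $\mathfrak{X}$ is the attached scheme of $\Gamma$, by definition each relation $R_i$ equals some $\Gamma_{\tilde{i}}$ with $\tilde{i}=(\partial_\Gamma(x,y),\partial_\Gamma(y,x))$ constant on $R_i$. So fix $i$ with $1\le i\le d$ and pick any $(x,y)\in R_i$; write $\tilde{\partial}_\Gamma(x,y)=(s,t)$. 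The key observation is that $(x,y)\in A\Gamma$ if and only if $\partial_\Gamma(x,y)=1$, i.e. if and only if $s=1$; and this condition depends only on $\tilde{i}$, hence only on which relation $R_i$ the pair lies in, not on the particular pair chosen.

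The argument then splits into the two cases. If $s=1$, then every pair in $R_i=\Gamma_{\tilde{i}}$ has first coordinate $1$, so every such pair is an arc, giving $R_i\subseteq A\Gamma$. If $s\ne 1$ (necessarily $s\ge 2$ since $x\ne y$ on a non-diagonal relation), then no pair in $R_i$ has $\partial_\Gamma(x,y)=1$, so $R_i\cap A\Gamma=\emptyset$. This is essentially the whole content: the constancy of the two-way distance on each relation of the attached scheme forces $A\Gamma$ to be a union of some of the $R_i$'s. I would also note the harmless edge case $i=0$: $R_0$ is the diagonal, which is disjoint from $A\Gamma$ by the definition of a digraph (arcs join distinct vertices), though the statement only claims this for $1\le i\le d$.

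There is no real obstacle here; the only thing to be careful about is the logical direction of the definition of "attached scheme." I would make explicit that $\mathfrak{X}(\Gamma)=(V\Gamma,\{\Gamma_{\tilde{i}}\}_{\tilde{i}\in\tilde{\partial}(\Gamma)})$ being an association scheme and being equal (as a scheme, i.e. with the same partition of $X\times X$) to $\mathfrak{X}=(X,\{R_i\}_{i=0}^d)$ means precisely that the partitions $\{R_i\}$ and $\{\Gamma_{\tilde{i}}\}$ coincide. Once that identification is in hand, the claim is immediate from the reasoning above. Thus I expect the proof to be just a few lines, with the "main step" being the trivial remark that membership in $A\Gamma$ is detected by the first coordinate of the two-way distance, which is constant on each $R_i$.
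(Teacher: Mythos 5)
Your proposal is correct: since $\mathfrak{X}$ is the attached scheme, each $R_i$ coincides with some $\Gamma_{\tilde{i}}$, and membership in $A\Gamma$ is exactly the condition that the first coordinate of $\tilde{i}$ equals $1$, which is constant on $R_i$. This is precisely the immediate argument the paper has in mind when it states the result as a Fact without proof, so there is nothing to add.
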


\begin{fact}\label{fac2}
The digraph $(X,(A\Gamma)^{\top})$ is weakly distance regular, and has the same attached scheme with $\Gamma$.
\end{fact}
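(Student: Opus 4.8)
The plan is to show that the two-way distance structure is invariant under reversing all arcs, so that reversing the arc set of $\Gamma$ produces a digraph with the same relations. First I would observe that for any $x,y\in X$, a path of length $r$ from $x$ to $y$ in $(X,(A\Gamma)^{\top})$ corresponds bijectively, by listing its vertices in reverse order, to a path of length $r$ from $y$ to $x$ in $\Gamma$; consequently $\partial_{(X,(A\Gamma)^{\top})}(x,y)=\partial_{\Gamma}(y,x)$ for all $x,y$. In particular $(X,(A\Gamma)^{\top})$ is strongly connected because $\Gamma$ is, and it has the same diameter. Applying this identity in both directions gives $\wz{\partial}_{(X,(A\Gamma)^{\top})}(x,y)=(\partial_{\Gamma}(y,x),\partial_{\Gamma}(x,y))$, which is the ``swap'' of $\wz{\partial}_{\Gamma}(x,y)$.

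Next I would translate this into a statement about the attached schemes. Writing $\mathfrak{X}(\Gamma)=(X,\{\Gamma_{\wz{i}}\}_{\wz{i}\in\wz{\partial}(\Gamma)})$, the computation above shows that $\wz{\partial}((X,(A\Gamma)^{\top}))=\{(b,a):(a,b)\in\wz{\partial}(\Gamma)\}$ and that, for $\wz{i}=(a,b)$, the relation $((X,(A\Gamma)^{\top}))_{(b,a)}$ equals $\{(x,y):\partial_{\Gamma}(y,x)=b,\ \partial_{\Gamma}(x,y)=a\}=(\Gamma_{(a,b)})^{\top}=(\Gamma_{\wz{i}})^{\top}$. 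Since $\mathfrak{X}(\Gamma)$ is an association scheme by hypothesis, axiom \ref{3} tells us that $(\Gamma_{\wz{i}})^{\top}=\Gamma_{\wz{i}^{*}}$ is again one of the relations $\Gamma_{\wz{j}}$; hence the family $\{((X,(A\Gamma)^{\top}))_{\wz{j}}\}$ is precisely the family $\{\Gamma_{\wz{j}}\}$ reindexed. Therefore $\mathfrak{X}((X,(A\Gamma)^{\top}))$ is the same configuration as $\mathfrak{X}(\Gamma)$ as a partition of $X\times X$ into relations, so it is an association scheme and equals the attached scheme of $\Gamma$. In particular $(X,(A\Gamma)^{\top})$ is weakly distance-regular.

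There is essentially no obstacle here: the only point requiring a little care is to confirm that the index set $\wz{\partial}$ really is closed under swapping coordinates, which is exactly what the path-reversal bijection gives, combined with the fact that in an association scheme each $R_i^{\top}$ is again a relation. I would state the path-reversal bijection explicitly as the one routine lemma, and the rest is bookkeeping with the definitions of $\wz{\partial}_{\Gamma}$, $\Gamma_{\wz{i}}$, and the attached scheme.
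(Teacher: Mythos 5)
The paper offers no proof of this Fact, treating it as immediate, and your argument is precisely the routine verification intended: path reversal gives $\partial_{(X,(A\Gamma)^{\top})}(x,y)=\partial_{\Gamma}(y,x)$, so the two-way-distance relations of the reversed digraph form the same partition of $X\times X$ as those of $\Gamma$, whence the attached scheme is the same and weak distance-regularity transfers. One small bookkeeping correction that does not affect your conclusion: the set $\{(x,y):\partial_{\Gamma}(y,x)=b,\ \partial_{\Gamma}(x,y)=a\}$ is $\Gamma_{(a,b)}$ itself rather than $(\Gamma_{(a,b)})^{\top}=\Gamma_{(b,a)}$, so in fact $\bigl((X,(A\Gamma)^{\top})\bigr)_{(b,a)}=\Gamma_{(a,b)}$ and the relations coincide literally, making the appeal to the transpose axiom unnecessary.
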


In the remainder of this paper, we may assume that $\Gamma$ has diameter $2$. Since
\begin{align}\label{two way distance set}
\{(0,0),(1,2),(2,1)\}\subseteq\tilde{\partial}(\Gamma)\subseteq \{(0,0),(1,2),(2,1),(1,1),(2,2)\},
\end{align}
its attached scheme has at most $4$ classes. By a theorem of Higman \cite{HC75}, association schemes with at most four classes are commutative.

\subsection{The attached scheme of $\Gamma$ has $2$ classes} In this subsection, we consider the case that the attached scheme of $\Gamma$ has $2$ classes.

\begin{lemma}\label{lem3.3}
If $d=2$, then $(X,R_i)$ is a weakly distance-regular digraph of diameter $2$ with $\mathfrak{X}$ as the attached scheme for $i\in\{1,2\}$.
\end{lemma}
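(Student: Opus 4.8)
The plan is to prove the statement for $i=1$ directly and then obtain the case $i=2$ from Fact~\ref{fac2}. Put $D:=(X,R_1)$. Since each relation of $\mathfrak{X}$ is nonempty, $R_1\neq\emptyset$, and $k_1=k_2\geq 1$ because $R_1^{\top}=R_2$. The heart of the matter is to show that $D$ is strongly connected, has diameter $2$, and has distance relations $\Gamma_0=R_0$, $\Gamma_1=R_1$, $\Gamma_2=R_2$. Granting this, the two-way distances are immediate: $\tilde{\partial}_D(x,x)=(0,0)$; for $(x,y)\in R_1$ one has $\partial_D(x,y)=1$ while $(y,x)\in R_1^{\top}=R_2=\Gamma_2$ forces $\partial_D(y,x)=2$, so $\tilde{\partial}_D(x,y)=(1,2)$; and symmetrically $\tilde{\partial}_D(x,y)=(2,1)$ for $(x,y)\in R_2$. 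Hence $\tilde{\partial}(D)=\{(0,0),(1,2),(2,1)\}$ with $D_{(0,0)}=R_0$, $D_{(1,2)}=R_1$ and $D_{(2,1)}=R_2$, so $\mathfrak{X}(D)=\mathfrak{X}$, which is by hypothesis an association scheme; that is, $D=(X,R_1)$ is a weakly distance-regular digraph of diameter $2$ with $\mathfrak{X}$ as attached scheme. Applying Fact~\ref{fac2} to $D$ gives the same conclusion for its reverse digraph $(X,R_2)$, and reversal preserves strong connectivity and diameter, so the case $i=2$ follows.

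The main obstacle is the single inequality $p_{1,1}^{2}\neq 0$; everything else is bookkeeping once it is available. To prove it, note that $1\neq 1^{*}$ forces $p_{1,1}^{0}=0$, so $A_1A_1=p_{1,1}^{1}A_1+p_{1,1}^{2}A_2$. If $p_{1,1}^{2}=0$, then $A_1^{2}=p_{1,1}^{1}A_1$, and comparing the constant row sums of $A_1$ and $A_1^{2}$ (equivalently, $p_{1,1}^{1}+p_{1,1}^{2}=k_1$ by Lemma~\ref{jb}\ref{jb-1}) forces $A_1^{2}=k_1A_1$. Then for any $x$ and any $w\in R_1(x)$, each $z\in R_1(w)$ lies on a directed $2$-path $x\to w\to z$, so $(A_1^{2})_{xz}\geq 1$ and hence $z\in R_1(x)$; thus $R_1(w)\subseteq R_1(x)$, and since $|R_1(w)|=|R_1(x)|=k_1$ we get $R_1(w)=R_1(x)$. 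But then $w\in R_1(x)=R_1(w)$, contradicting that $R_1$ is loopless. (Alternatively, one could invoke the remark in Section~1 that non-symmetric $2$-class schemes correspond to distance-regular digraphs of diameter $2$, which are $P$-polynomial with respect to $R_0,R_1,R_2$ and therefore have $p_{1,1}^{2}\neq 0$; I would prefer the self-contained argument.)

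Granting $p_{1,1}^{2}\neq 0$, take $(x,y)\in R_2$. Then $|R_1(x)\cap R_{1^{*}}(y)|=p_{1,1}^{2}>0$, so some $w$ satisfies $(x,w),(w,y)\in R_1$, which is a directed path of length $2$ from $x$ to $y$ in $D$; as $R_2$ is disjoint from $R_0\cup R_1$, this yields $\partial_D(x,y)=2$. Combined with $\partial_D(x,y)=1$ for $(x,y)\in R_1$ and $\partial_D(x,x)=0$, and using $X\times X=R_0\cup R_1\cup R_2$, this shows $D$ is strongly connected, has diameter $2$, and has distance relations exactly $R_0,R_1,R_2$, which completes the argument sketched in the first paragraph. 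I also note a shorter route available if at this point $\mathfrak{X}$ is already known to be the attached scheme of a weakly distance-regular digraph $\Gamma$ of diameter $2$: by Fact~\ref{sub} together with $(1,2)\in\tilde{\partial}(\Gamma)$ from \eqref{two way distance set}, the arc set $A\Gamma$ is a non-symmetric union of the $R_l$, hence $A\Gamma\in\{R_1,R_2\}$, and Fact~\ref{fac2} then delivers both cases at once. Since this presupposes the existence of such a $\Gamma$, I would keep the direct argument above as the main proof.
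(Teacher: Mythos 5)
Your proposal is correct and takes essentially the same route as the paper: both arguments reduce to the key fact $p_{1,1}^{2}\neq 0$ (i.e.\ $R_2\in R_1^2$), deduce that $(X,R_1)$ has $\tilde{\partial}=\{(0,0),(1,2),(2,1)\}$ with distance relations $R_0,R_1,R_2$, and then obtain the case $i=2$ from Fact~\ref{fac2}. The only difference is local and minor: the paper gets $p_{1,1}^{1^{*}}=1+p_{1,1^{*}}^{1}\geq 1$ directly from the valency identities of Lemma~\ref{jb}, while you argue by contradiction via $A_1^2=k_1A_1$ and the looplessness of $R_1$; both are valid.
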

\begin{proof}
Let $\Delta=(X,R_1)$. Since $A\Delta=R_1$ and $R_1$ is non-symmetric, we have $R_1=\Delta_{(1,a)}$ with $a>1$, which implies that $R_2=\Delta_{(a,1)}$.
Since $d=2$ and $2=1^*$, by setting $(i,j)=(1,1)$ and $(i,j)=(1,1^{*})$ in Lemma \ref{jb} \ref{jb-1} respectively, we have $k_{1}k_{1}=p_{1,1}^{1}k_{1}+p_{1,1}^{1^{*}}k_{1^{*}}$ and $k_{1}k_{1^{*}}=k_1+p_{1,1^{*}}^{1}k_{1}+p_{1,1^{*}}^{1^{*}}k_{1^{*}}.$ The fact $k_1=k_{1^*}$ implies $k_1=p_{1,1}^{1}+p_{1,1}^{1^{*}}=1+p_{1,1^{*}}^{1}+p_{1,1^{*}}^{1^*}$. In view of Lemma \ref{jb} \ref{jb-2}, one gets $p_{1,1}^{1}=p_{1,1^{*}}^{1^{*}}$, and so $p^{1^{*}}_{1,1}=1+p^{1}_{1,1^{*}}\geq1$. This implies $R_2=R_{1^*}\in R_1^2$, and so $a=2$.  Thus, $\Delta$ is a weakly distance-regular digraph of diameter $2$ with the attached scheme $$(X,\{\Delta_{(0,0)},\Delta_{(1,2)},\Delta_{(2,1)}\}).$$

The desired result follows from Fact \ref{fac2}.
\end{proof}

\begin{lemma}\label{lem3.4}
Suppose that $\mathfrak{X}$ is the attached scheme of $\Gamma$. Then $d=2$, and $\Gamma=(X,R_i)$ for some $i\in\{1,2\}$.
\end{lemma}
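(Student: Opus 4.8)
The plan is to unwind the definitions, using the standing hypothesis of this subsection that the attached scheme of $\Gamma$ has exactly two classes; together with \eqref{two way distance set} this means $\tilde{\partial}(\Gamma)=\{(0,0),(1,2),(2,1)\}$. First I would note that the hypothesis ``$\mathfrak{X}$ is the attached scheme of $\Gamma$'' says precisely $\mathfrak{X}=\mathfrak{X}(\Gamma)$; since $\mathfrak{X}(\Gamma)$ has two classes and $\mathfrak{X}=(X,\{R_l\}_{l=0}^d)$, this is exactly the assertion $d=2$.

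It then remains to pin down $A\Gamma$. I would match the relations of $\mathfrak{X}$ with those of $\mathfrak{X}(\Gamma)$: the relations $R_0$ and $\Gamma_{(0,0)}$ are both the diagonal, hence equal; and since $\{R_1,R_2\}$ is the unique non-symmetric pair of $\mathfrak{X}$ (by the running assumption $R_1^{\top}=R_2$ and $R_h^{\top}=R_h$ for $h\geq 3$) while $\{\Gamma_{(1,2)},\Gamma_{(2,1)}\}$ is the unique non-symmetric pair of $\mathfrak{X}(\Gamma)$, these pairs coincide; in particular $\Gamma_{(1,2)}\in\{R_1,R_2\}$.

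Next I would observe that $A\Gamma=\Gamma_{(1,2)}$: by definition $A\Gamma$ is the set of pairs $(x,y)$ with $\partial_{\Gamma}(x,y)=1$, and the two-way distance of such a pair lies in $\tilde{\partial}(\Gamma)$ and has first coordinate $1$, so it equals $(1,2)$ because $(1,1)\notin\tilde{\partial}(\Gamma)$; the reverse inclusion is immediate. (Alternatively, Fact \ref{sub} already shows $A\Gamma$ is a union of some of the $R_l$, after which one only needs that $A\Gamma$ contains no edge.) Hence $A\Gamma=\Gamma_{(1,2)}=R_i$ for some $i\in\{1,2\}$, that is, $\Gamma=(X,R_i)$.

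I do not expect a genuine obstacle here: the argument is essentially bookkeeping that identifies the abstract scheme $\mathfrak{X}$ with the combinatorially defined attached scheme $\mathfrak{X}(\Gamma)$. The only place where the two-class hypothesis is actually used is the observation that $\Gamma$ has no arc $(x,y)$ whose reverse $(y,x)$ is also an arc; this is what stops $A\Gamma$ from being strictly larger than $\Gamma_{(1,2)}$ (for instance, equal to $R_1\cup R_2$) and thereby forces $\Gamma=(X,R_i)$.
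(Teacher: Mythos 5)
Your proof is correct and amounts to the same definitional bookkeeping as the paper's: both get $d=2$ from the class count and then identify $A\Gamma$ with a single non-symmetric relation, the paper by citing Fact \ref{sub} and Lemma \ref{lem3.3}, you by noting that \eqref{two way distance set} together with the two-class hypothesis forces $\tilde{\partial}(\Gamma)=\{(0,0),(1,2),(2,1)\}$, so $(1,1)\notin\tilde{\partial}(\Gamma)$ and hence $A\Gamma=\Gamma_{(1,2)}\in\{R_1,R_2\}$. There is no gap.
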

\begin{proof}
Since the attached scheme of $\Gamma$ has $2$ classes, we have $d=2$. Then it is immediately from Fact \ref{sub} and Lemma \ref{lem3.3}.
\end{proof}

\subsection{The attached scheme of $\Gamma$ has $3$ classes} In this subsection, we consider the case that the attached scheme of $\Gamma$ has $3$ classes.
\begin{lemma}\label{lem3.5}
If $d=3$, then $p_{1,1}^{1^{*}}\neq 0$ or $p_{1,1}^{3}\neq 0$.
\end{lemma}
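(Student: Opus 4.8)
The statement asserts that for a $3$-class scheme $\mathfrak{X}$ with $R_1^\top=R_2$ and $R_3$ symmetric, one cannot have both $p_{1,1}^{1^*}=0$ and $p_{1,1}^{3}=0$. The plan is to argue by contradiction: assume $p_{1,1}^{1^*}=p_{1,1}^{1}=0$ wait—careful: $1^*=2$, so the two hypotheses to contradict are $p_{1,1}^{2}=0$ and $p_{1,1}^{3}=0$. Since $R_1^2\subseteq\{R_0,R_1,R_2,R_3\}$ and $p_{1,1}^0=0$ (as $R_1$ is non-symmetric, $p_{1,1}^0=0$ because $R_1R_1$ contains $R_0$ only if $R_1=R_{1^*}$), these two vanishing conditions would force $R_1^2=\{R_1\}$, i.e. $A_1^2=p_{1,1}^1 A_1$ with $p_{1,1}^1=k_1$ by Lemma \ref{jb}\ref{jb-3}. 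So the first step is to reduce the hypothesis $p_{1,1}^2=p_{1,1}^3=0$ to the single clean identity $A_1^2=k_1 A_1$, equivalently $R_1^2=\{R_1\}$.

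Next I would exploit $A_1^2=k_1A_1$ together with the defining scheme relations. Transposing gives $A_2^2=k_1A_2$ (since $k_1=k_2$ and $A_2=A_1^\top$), and taking the product $A_1A_2$: by Lemma \ref{jb}\ref{jb-2} the intersection numbers $p_{1,2}^h$ satisfy $p_{1,2}^0=k_1$, and I would compute $p_{1,2}^1,p_{1,2}^2,p_{1,2}^3$ using the associativity relation Lemma \ref{jb}\ref{jb-4} applied to the triple $(1,1,h)$ or $(1,2,h)$, or more directly by multiplying $A_1\cdot A_1 A_2 = A_1(k_1 A_2)$ against $A_1 A_1\cdot A_2 = k_1 A_1 A_2$, to pin down which relations appear. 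The key structural consequence I expect is that $\langle R_1\rangle=\{R_0,R_1,R_2\}$ forms a closed subset (since $R_1^2=\{R_1\}$, $R_2^2=\{R_2\}$, and $R_1R_2$, $R_2R_1$ land inside $\{R_0,R_1,R_2\}$), so $R_3$ is "separated" — meaning $R_3$ together with $R_0$ and the subset $\{R_0,R_1,R_2\}$ interact in a very rigid way, and in fact $\{R_0,R_1,R_2\}$ acts like a quotient/subscheme decomposition. Then I would derive a contradiction from valency/counting: from $A_1^2=k_1A_1$ one sees that the digraph $(X,R_1)$ has the property that $R_1(x)\subseteq R_1(w)$ is forced to be an equality-type relation, making $R_1$ an equivalence-like relation on each $R_1$-out-neighbourhood, which combined with strong connectivity (diameter $3$) forces $d<3$, a contradiction; alternatively, count $k_1 k_1=\sum_h p_{1,1}^h k_h = k_1 k_1$ trivially but use Lemma \ref{jb}\ref{jb-1} with the pair $(1,3)$ and $(1,2)$ to force $p_{1,3}^h$ relations that contradict $R_3\neq R_0$ having positive valency.

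The cleanest route, which I would pursue first, is this: $R_1^2=\{R_1\}$ means $A_1^2=k_1A_1$, so $\frac1{k_1}A_1$ is an idempotent-like operator; more precisely the relation "$y\in R_1(x)$" is transitive on the strongly connected digraph $(X,R_1\cup R_2\cup R_3)$, but actually one shows $(X,R_1)$ itself decomposes: $y\in R_1(x)$ and $z\in R_1(y)$ imply $z\in R_1(x)$, and symmetrically via $A_2$, so $R_1\cup R_2\cup R_0$ restricted to any "$R_1$-connected component" is a complete structure, i.e. $\langle R_1\rangle(x)$ carries the scheme $\{R_0,R_1,R_2\}$ and these components partition $X$. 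If there is more than one component, $R_3$ would have to be $(X\times X)\setminus\langle R_1\rangle$-type, giving $\langle R_1\rangle$ a closed subset with the quotient a $1$-class scheme, and one checks the resulting two-way distances: between components the distance is $(1,1)$ in the $R_3$-direction only, so $\partial(\Gamma)$ would not reach $(1,2)$ and $(2,1)$ across components, forcing the whole digraph to have diameter issues inconsistent with $d=3$ and the standing assumption that $\Gamma$ has diameter $2$ with a $3$-class attached scheme containing $R_3$ essentially. \textbf{Main obstacle.} The delicate point is ruling out the configuration where $\langle R_1\rangle=\{R_0,R_1,R_2\}$ is a proper closed subset and $R_3$ is its complement — I expect to need Lemma \ref{jb}\ref{jb-1} and \ref{jb}\ref{jb-2} to show $p_{1,3}^3\cdot k_3 = p_{1,1^*}^3 k_1=p_{1,1}^3 k_1\cdot(\text{via }2^*=1)$ wait, $p_{1,3}^{3}k_3=p_{3,1^*}^1 k_1$ hmm; the real work is a short intersection-number computation showing $p_{1,1}^2=p_{1,1}^3=0$ propagates to $p_{1,3}^j=0$ for $j\in\{1,2\}$ and hence $A_1A_3=k_1A_3$, which combined with $A_1^2=k_1A_1$ and $A_1J=k_1J$ (sum of all $A_i$) forces $A_1(A_0+A_3)=k_1A_3$ but also $A_1(A_1+A_2)=k_1(A_1+A_2)$, summing to $k_1J=A_1 J=k_1J$ — consistent — so the contradiction must instead come from $A_3A_1$ versus $A_1A_3$ and the fact that $A_3$ symmetric with $A_3^2$ containing $A_0$: computing $p_{3,1}^{1^*}$ two ways will give $p_{3,1}^2 k_2 = p_{3,2^*}^{?}\cdots$ and ultimately $k_1 \mid$ something it cannot divide unless $k_3=0$. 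I anticipate this divisibility/counting contradiction is exactly what the authors' proof delivers, and getting the bookkeeping of stars ($1^*=2$, $2^*=1$, $3^*=3$) right throughout is the part most prone to error.
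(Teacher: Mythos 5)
Your reduction is on the right track: assuming $p_{1,1}^{1^*}=p_{1,1}^{3}=0$, and noting $p_{1,1}^{0}=0$ because $R_1\neq R_{1^*}$, you correctly arrive at $R_1^2=\{R_1\}$ and $p_{1,1}^{1}=k_1$ (this last comes from Lemma \ref{jb}\ref{jb-1} with $(i,j)=(1,1)$, i.e. $k_1^2=p_{1,1}^1k_1$, not from \ref{jb}\ref{jb-3}). But at that point you have already reached the contradiction and do not see it: Lemma \ref{jb}\ref{jb-3} gives $\sum_{j=0}^{3}p_{1,j}^{1}=k_1$, while $p_{1,0}^{1}=1$ (for $(x,y)\in R_1$ the vertex $y$ itself lies in $R_1(x)\cap R_0(y)$), so $p_{1,1}^{1}+p_{1,0}^{1}=k_1+1>k_1$. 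That single line is the paper's entire proof. Everything after your first step is a detour that never closes: you yourself observe that the computation via $A_1A_3=k_1A_3$ and $A_1(A_1+A_2)=k_1(A_1+A_2)$ comes out ``consistent,'' and the final contradiction is only ``anticipated'' as some unspecified divisibility argument, so the proposal as written does not prove the statement.

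There is also an unjustified structural claim along the way: to assert that $\{R_0,R_1,R_2\}$ is closed you need $R_2R_1$ and $R_1R_2$ to avoid $R_3$, i.e. $p_{1,1^*}^{3}=0$, which does not follow from the assumed vanishing of $p_{1,1}^{1^*}$ and $p_{1,1}^{3}$; so the ``$\langle R_1\rangle=\{R_0,R_1,R_2\}$ and $R_3$ is separated'' picture is not established. (If you want an alternative elementary finish instead of the counting one: $R_1^2=\{R_1\}$ makes $R_1$ a transitive irreflexive relation in which every vertex has $k_1\geq 1$ out-neighbours, impossible on a finite set; but the intersection-number count above is shorter and is what the paper does.)
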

\begin{proof}
Suppose, to the contrary that $p_{1,1}^{1^{*}}=0$ and $p_{1,1}^{3}=0$. Then $R_1^2=\{R_1\}$. By setting $(i,j)=(1,1)$ in Lemma \ref{jb} \ref{jb-1}, we obtain $k_1^2=p_{1,1}^1k_1$, and so $p_{1,1}^1=k_1$. Then $p_{1,1}^1+p_{1,0}^1=k_{1}+1$, contrary to Lemma \ref{jb} \ref{jb-3}.
\end{proof}

\begin{lemma}\label{lem3.6}
If $d=3$, then $(X,R_i\cup R_3)$ is a weakly distance-regular digraph of diameter $2$ with $\mathfrak{X}$ as the attached scheme for $i\in\{1,2\}$.
\end{lemma}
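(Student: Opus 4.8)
The plan is first to reduce to $i=1$: by Fact~\ref{fac2}, once $\Delta:=(X,R_1\cup R_3)$ is shown to be weakly distance-regular with $\mathfrak{X}$ as its attached scheme, the transpose digraph $(X,(R_1\cup R_3)^{\top})=(X,R_2\cup R_3)$ has the same attached scheme, which settles the case $i=2$. So I would work throughout with $\Delta$.

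The next step is to read off the two-way distances in $\Delta$. Since $A\Delta=R_1\cup R_3$ while $R_0$ and $R_2$ are disjoint from $A\Delta$, a pair $(x,y)\in R_0$ has two-way distance $(0,0)$; a pair $(x,y)\in R_3$ has two-way distance $(1,1)$ because $R_3$ is symmetric; a pair $(x,y)\in R_1$ has $\partial_{\Delta}(x,y)=1$ while $\partial_{\Delta}(y,x)\geq 2$ because $(y,x)\in R_2$ and $R_2\cap A\Delta=\emptyset$; and symmetrically for $R_2$. Hence the whole computation collapses to the single claim that for $(x,y)\in R_1$ there is a path of length $2$ from $y$ to $x$ in $\Delta$, i.e. $\partial_{\Delta}(y,x)=2$.

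To prove this claim I would count the vertices $z$ with $(y,z)\in R_1\cup R_3$ and $(z,x)\in R_1\cup R_3$. Writing $(z,x)\in R_s$ as $z\in R_{s^{*}}(x)$ and using $1^{*}=2$, $3^{*}=3$, the four contributing sets are $R_1(y)\cap R_2(x)$, $R_1(y)\cap R_3(x)$, $R_3(y)\cap R_2(x)$, $R_3(y)\cap R_3(x)$; these are pairwise disjoint, and since $(y,x)\in R_2$ their sizes are $p_{1,1}^{2}$, $p_{1,3}^{2}$, $p_{3,1}^{2}$, $p_{3,3}^{2}$ respectively. So it suffices to show $p_{1,1}^{2}+p_{1,3}^{2}+p_{3,1}^{2}+p_{3,3}^{2}>0$. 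By Lemma~\ref{lem3.5}, either $p_{1,1}^{1^{*}}=p_{1,1}^{2}\neq 0$, and we are done at once, or $p_{1,1}^{3}\neq 0$; in the latter case Lemma~\ref{jb}~\ref{jb-2} gives $p_{1,1}^{3}k_3=p_{3,1^{*}}^{1}k_1=p_{3,2}^{1}k_1$, hence $p_{3,2}^{1}\neq 0$, and a second application of Lemma~\ref{jb}~\ref{jb-2} (with $h=2$, $i=3$, $j=1$) yields $p_{3,1}^{2}k_2=p_{3,2}^{1}k_1\neq 0$, so $p_{3,1}^{2}\neq 0$. Either way the count is positive, so $\partial_{\Delta}(y,x)=2$.

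Putting these together, $\Delta$ is strongly connected of diameter $2$ with $\Delta_{(0,0)}=R_0$, $\Delta_{(1,1)}=R_3$, $\Delta_{(1,2)}=R_1$ and $\Delta_{(2,1)}=R_2$; thus $\mathfrak{X}(\Delta)=\mathfrak{X}$ is an association scheme and $\Delta$ is weakly distance-regular with $\mathfrak{X}$ as its attached scheme, and the case $i=2$ follows from Fact~\ref{fac2} as above. The only genuine obstacle is the positivity of the path-count: the first alternative of Lemma~\ref{lem3.5} is immediate, while the second requires the two transfers through Lemma~\ref{jb}~\ref{jb-2}, where one must keep careful track of the starred indices; everything else is bookkeeping on two-way distances.
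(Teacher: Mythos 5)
Your proof is correct and takes essentially the same route as the paper: after reducing to $i=1$ via Fact \ref{fac2} and reading off the two-way distances, both arguments show that a pair in $R_1$ has return distance exactly $2$ by combining Lemma \ref{lem3.5} with Lemma \ref{jb} \ref{jb-2}. The paper runs this as a contradiction (assuming the return distance exceeds $2$ forces $p_{1,1}^{1^{*}}=p_{1,1}^{3}=0$), whereas you verify directly that $p_{1,1}^{2}+p_{1,3}^{2}+p_{3,1}^{2}+p_{3,3}^{2}>0$, which is only a cosmetic difference.
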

\begin{proof}
According to Fact \ref{fac2}, it suffices to prove that $\Delta=(X,R_1\cup R_3)$ is a weakly distance-regular digraph of diameter $2$.

Since $R_1$ is non-symmetric and $R_3$ is symmetric, we have $R_1=\Delta_{(1,a)}$  with $a>1$ and $R_3=\Delta_{(1,1)}$.

Suppose $a>2$. Since $A\Delta=R_1\cup R_3$, one has $p_{1,1}^{1^{*}}=p_{1,3}^{1^{*}}=0$. By Lemma \ref{jb} \ref{jb-2}, we get $p_{1,1}^{3}=p_{1,3}^{1^{*}}=0$, contrary to Lemma \ref{lem3.5}.

Note that $a=2$, and so $R_1=\Delta_{(1,2)}$. It follows that $\Delta$ is a weakly distance-regular digraph of diameter $2$ with the attached scheme $(X,\{\Delta_{(0,0)},\Delta_{(1,2)},\Delta_{(2,1)},\Delta_{(1,1)}\}).$ Thus, the desired result follows.
\end{proof}

\begin{lemma}\label{lem3.7}
Suppose $d=3$. If $\mathfrak{X}$ is neither a $P$-polynomial association scheme nor a wedge product of at least two $2$-class association schemes with the same parameters and a $1$-class association scheme, then $(X,R_i)$ is a weakly distance-regular digraph of diameter $2$ with $\mathfrak{X}$ as the attached scheme for $i\in\{1,2\}$.
\end{lemma}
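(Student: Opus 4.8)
The plan is to show that under the stated hypotheses the digraph $\Delta=(X,R_i)$ is weakly distance-regular of diameter $2$ with $\mathfrak{X}$ as its attached scheme; by Fact \ref{fac2} it suffices to treat $i=1$, so set $\Delta=(X,R_1)$. Since $R_1$ is non-symmetric, $R_1=\Delta_{(1,a)}$ for some $a>1$ and hence $R_2=\Delta_{(a,1)}$. The core of the argument is to prove $a=2$, i.e.\ that $R_2\in R_1^2$ (equivalently $p_{1,1}^{2}=p_{1,1}^{1^{*}}\neq 0$), and that the remaining relation $R_3$ has two-way distance $(1,1)$ or $(2,2)$ in $\Delta$, so that $\mathfrak{X}=\mathfrak{X}(\Delta)$. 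The natural dichotomy, suggested by Lemma \ref{lem3.5}, is to split on whether $p_{1,1}^{1^{*}}\neq 0$ or $p_{1,1}^{3}\neq 0$.

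First I would dispose of the case $p_{1,1}^{1^{*}}\neq 0$: then already $R_2\in R_1^2$, so $a=2$ and $R_1=\Delta_{(1,2)}$, $R_2=\Delta_{(2,1)}$. It remains to pin down $\tilde\partial_\Delta(x,y)$ for $(x,y)\in R_3$. Since $R_3$ is symmetric and $\Delta$ is strongly connected of diameter $2$, the only possibilities are $(1,1)$ or $(2,2)$; in either case $\mathfrak{X}(\Delta)=\mathfrak{X}$ has $3$ classes and we are done. So the substance is the case $p_{1,1}^{1^{*}}=0$ but $p_{1,1}^{3}\neq 0$, i.e.\ $R_1^2\subseteq\{R_1,R_3\}$ with $R_3\in R_1^2$. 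Here I would analyze $R_1^2$ and $\langle R_1\rangle$. If $R_1^2=\{R_3\}$, then using Lemma \ref{jb}(i) with $(i,j)=(1,1)$ gives $k_1^2=p_{1,1}^{3}k_3$, and combining with Lemma \ref{jb}(iii) I would aim to force $\langle R_1\rangle=\{R_0,R_1,R_2,R_3\}=R$ and show $\mathfrak{X}$ is $P$-polynomial with respect to $R_0,R_1,R_3,R_2$ (a distance-regular digraph of long type, a lexicographic product with $\overline{K}_{k}$ as in Lemma \ref{lem} / Theorem \ref{drdg}), contradicting the hypothesis. If instead $R_1^2=\{R_1,R_3\}$, then $\langle R_1\rangle$ is a proper closed subset containing $R_1,R_2,R_3$ but one checks $R_3$ behaves like the nondiagonal relation of a $1$-class association scheme that is ``factored out'': I would verify $\langle R_3\rangle=\{R_0,R_3\}$, $p_{1,1^{*}}^{3}=k_1$, and conditions (a),(b) for the wedge product with $K=\langle R_3\rangle$, so that $\mathfrak{X}$ is the wedge product of the subschemes on $\langle R_1\rangle(x)=X$ (the $2$-class structure $\{R_0,R_1\cup R_3 \text{ split}\}$… ) — more precisely of the $2$-class subschemes $(\langle R_1\rangle(x),\{R_h\cap(\cdots)\}_{h=0}^{2})$ with a $1$-class scheme, again contradicting the hypothesis. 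In every surviving case $a=2$ and the two-way distances realize exactly $R_0,R_1,R_2,R_3$.

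I would organize the write-up as: (1) reduce to $\Delta=(X,R_1)$ and record $R_1=\Delta_{(1,a)}$, $R_2=\Delta_{(a,1)}$; (2) if $p_{1,1}^{1^{*}}\neq 0$ conclude $a=2$ immediately and identify $R_3=\Delta_{(1,1)}$ or $\Delta_{(2,2)}$; (3) if $p_{1,1}^{1^{*}}=0$, so $p_{1,1}^{3}\neq 0$, show via Lemma \ref{jb} that $\langle R_3\rangle=\{R_0,R_3\}$ and $p_{1,1^{*}}^{3}=k_1$ hold in the relevant sub-case, then invoke Lemma \ref{lem} and Theorem \ref{drdg} to realize $\mathfrak{X}$ either as a $P$-polynomial scheme or as the stated wedge product — contradiction, leaving only $a=2$; (4) conclude. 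The main obstacle I expect is step (3): carefully showing that when $R_1^2\not\ni R_2$ the relation $R_3$ must be the ``empty-graph'' factor of a lexicographic/wedge decomposition — i.e.\ verifying $\langle R_3\rangle=\{R_0,R_3\}$, the key equality $p_{1,1^{*}}^{3}=k_1$, and the wedge-product axioms (a),(b) — and matching the resulting decomposition precisely with the subschemes $(\langle R_1\rangle(x),\{R_h\cap([\langle R_1\rangle(x)]\times[\langle R_1\rangle(x)])\}_{h=0}^{2})$ named in the theorem statement, rather than merely ``some'' $2$-class scheme. The intersection-number identities in Lemma \ref{jb}, together with the fact that at most four classes forces commutativity, should make this bookkeeping tractable.
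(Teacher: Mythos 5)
Your dichotomy ($p_{1,1}^{1^{*}}\neq 0$ versus $p_{1,1}^{3}\neq 0$, guided by Lemma \ref{lem3.5}) is the right starting point, but you have assigned the difficulty to the wrong branch, and the branch you dismiss is exactly where the work lies. In the case $p_{1,1}^{1^{*}}\neq 0$ (i.e.\ $a=2$) you assert that $\Delta=(X,R_1)$ is ``strongly connected of diameter $2$'' and stop; this is circular, since diameter $2$ is precisely what must be proved (and the alternative $(1,1)$ you list for pairs in $R_3$ is impossible anyway, as $A\Delta=R_1$). If in this branch one also has $p_{1,1}^{3}=0$, then pairs in $R_3$ lie at $\Delta$-distance at least $3$, and this subcase genuinely occurs for the schemes excluded by the hypotheses: it is here that the paper, using Lemma \ref{jb}\ref{jb-1}, \ref{jb-2} and \ref{jb-4} with $(h,m,e,l)=(1,1,1,1^{*})$, derives $(p_{1,1}^{1}-p_{1,1}^{1^{*}}+1)(p_{1,1}^{1}-p_{1,1^{*}}^{3}+p_{1,1}^{1^{*}})=0$ and splits accordingly: either $p_{1,1^{*}}^{3}=k_1$, which forces $\langle R_3\rangle=\{R_0,R_3\}$ and, via Lemma \ref{lem} and Theorem \ref{drdg}, exhibits $\Delta$ as a long distance-regular digraph of girth $3$ (contradicting non-$P$-polynomiality); or $p_{1,1}^{1^{*}}=p_{1,1}^{1}+1$, which forces $p_{1,1^{*}}^{3}=0$, hence $\langle R_1\rangle=\{R_0,R_1,R_2\}$ and the wedge-product contradiction. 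None of this analysis appears in your sketch, so the lemma is not established.

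Conversely, the branch where you locate the hard work, $p_{1,1}^{1^{*}}=0$ and $p_{1,1}^{3}\neq 0$, is the easy one, and your plan for it is flawed. Since $R_3\in R_1^2\subseteq\langle R_1\rangle$ and $d=3$, any closed subset containing $R_1,R_2,R_3$ (and automatically $R_0$) is all of $R$; your subcase ``$R_1^2=\{R_1,R_3\}$ with $\langle R_1\rangle$ a proper closed subset containing $R_1,R_2,R_3$'' is therefore vacuous, and no wedge product of the kind named in the statement can arise here, because that decomposition requires $\langle R_1\rangle=\{R_0,R_1,R_2\}$, incompatible with $R_3\in R_1^2$. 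Likewise, your identification of the subcase $R_1^2=\{R_3\}$ with the long-type, lexicographic structure is off: long girth-$3$ digraphs have $p_{1,1}^{1^{*}}\neq 0$. What actually happens when $p_{1,1}^{1^{*}}=0$ and $p_{1,1}^{3}\neq 0$ is that the realized distances force $a=3$, $(X,\{\Delta_i\}_{i=0}^{3})$ is a non-symmetric association scheme, and $\Delta$ is a short distance-regular digraph of girth $4$, contradicting non-$P$-polynomiality in a couple of lines. In short, your proposal leaves the genuinely hard case unaddressed and misdiagnoses which excluded structures occur in which case.
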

\begin{proof}
Let $\Delta=(X,R_1)$. Since $R_1$ is non-symmetric and $R_3$ is symmetric, we have $R_1=\Delta_{(1,a)}$  with $a>1$ and $R_3=\Delta_{(b,b)}$  with $b>1$. If $a=b=2$, then $\Delta$ is a weakly distance-regular digraph of diameter $2$ with the attached scheme $(X,\{\Delta_{(0,0)},\Delta_{(1,2)},\Delta_{(2,1)},\Delta_{(2,2)}\})$, which implies that the desired result follows from Fact \ref{fac2}. Therefore, it suffices to show that $a=b=2$.

Suppose, to the contrary that $a>2$ or $b>2$. Since $p_{1,1}^{1^{*}}\neq 0$ or $p_{1,1}^{3}\neq 0$ from Lemma \ref{lem3.5}, we have $a>2$ and $b=2$, or $a=2$ and $b>2$. Note that the induced subdigraph of $\Delta$ on the vertex set $\langle R_1\rangle(x)$ is strongly connected for all $x\in X$. If $a>2$ and $b=2$, then $a=3$ and $(X,\{\Delta_i\}_{i=0}^3)$ is a non-symmetric association scheme since $\tilde{\partial}(\Delta)= \{(0,0),(1,a),(a,1),(2,2)\}$, which imply that $\Delta$ is a distance-regular digraph of girth $4$, contrary to the fact that $\mathfrak{X}$ is not $P$-polynomial.

Note that $a=2$ and $b>2$. The fact $R_3=\Delta_{(b,b)}$ implies $p_{1,1}^{3}=0$. Since $d=3$ and $2=1^*$, by setting $(i,j)=(1,1)$ in Lemma \ref{jb} \ref{jb-1}, we have $k^2_{1}=p_{1,1}^{1}k_{1}+p_{1,1}^{1^{*}}k_{1^{*}}$, and so $k_1=p_{1,1}^1+p_{1,1}^{1^*}$.

In view of Lemma \ref{jb} \ref{jb-2}, we obtain $p_{1,1}^{1}=p_{1,1^{*}}^{1}=p_{1,1^{*}}^{1^{*}}.$ By setting $(i,j)=(1,1^{*})$ in Lemma \ref{jb} \ref{jb-1}, we get $k_1^2=k_{1}k_{1^{*}}=k_1+2p_{1,1}^{1}k_{1}+p_{1,1^{*}}^{3}k_{3}$. By substituting $k_1=p_{1,1}^1+p_{1,1}^{1^*}$ and $p_{1,1^*}^3k_3=p_{3,1}^1k_1$ from Lemma \ref{jb} \ref{jb-2} into the above equation respectively, one obtains
\begin{align}
(p_{1,1}^{1^*})^{2}-p_{1,1}^1-p_{1,1}^{1^*}-(p_{1,1}^1)^{2}-p_{1,1^*}^3 k_3=0\label{bkb}
\end{align}
and $p_{1,3}^1=k_1-2p_{1,1}^1-1$.

By setting $(h,m,e,l)=(1,1,1,1^{*})$ in Lemma \ref{jb} \ref{jb-4}, we have
$$k_{1}+p_{1,1^{*}}^{1^{*}}p_{1,1^{*}}^{1}+p_{1,1^{*}}^{3}p_{1,3}^{1}=p_{1,1}^{1^{*}}p_{1^{*},1^{*}}^{1}.$$
Since $p_{1,1}^{1}=p_{1,1^{*}}^{1}=p_{1,1^{*}}^{1^{*}}$ and $p_{1,1}^{1^{*}}=p_{1^{*},1^{*}}^{1}$ from Lemma \ref{jb} \ref{jb-2}, we obtain
\begin{align}
k_{1}+(p_{1,1}^{1})^2+p_{1,1^{*}}^{3}p_{1,3}^{1}=(p_{1,1}^{1^{*}})^2.\nonumber
\end{align}
Substituting $k_1=p_{1,1}^1+p_{1,1}^{1^*}$ and $p_{1,3}^1=k_1-2p_{1,1}^1-1$ into the above equation, one has
\begin{align}
(p_{1,1}^{1}-p_{1,1}^{1^{*}}+1)(p_{1,1}^{1}-p_{1,1^{*}}^{3}+p_{1,1}^{1^{*}})=0.\nonumber
\end{align}

\textbf{Case 1.} $p_{1,1^{*}}^{3}=p_{1,1}^{1}+p_{1,1}^{1^{*}}$.

Since $k_1=p_{1,1}^{1}+ p_{1,1}^{1^*}$, we have $p_{1,1^{*}}^{3}=k_1$. Pick $(x,y),(y,z)\in R_3$. Suppose $(x,z)\notin R_0\cup R_3$. Then $(x,z)\in R_1\cup R_{1^*}$. Since the proofs are similar, we may assume $(x,z)\in R_1$. Since $p_{1,1^{*}}^{3}=k_1$, we have $z\in R_{1}(x)=P_{1,1^*}(x,y)$, contrary to the fact that $(y,z)\in R_3$. Then $(x,z)\in R_0\cup R_3$. Since $(x,y),(y,z)\in R_3$ were arbitrary, one has $R_3^2\subseteq\{R_0,R_3\}$. By induction, we get $R_3^h\subseteq\{R_0,R_3\}$ for $h>0$, which implies $\langle R_3\rangle =\{R_0,R_3\}$.

Let $\Lambda=\Delta/\langle R_3\rangle$. In view of Lemma \ref{lem} \ref{lem-1}, $\Delta$ is the lexicographic product of $\Lambda$ and the empty graph $\overline{K}_{k_3+1}$. Since $R_3=\Delta_{(b,b)}$, we get $\tilde{\partial}(\Lambda)=\tilde{\partial}(\Delta)\backslash\{(b,b)\}=\{(0,0),(1,2),(2,1)\}$. By Theorem \ref{drdg} and Lemma \ref{lem}, $\Lambda$ is a short distance-regular digraph of girth $3$. In view of the definition of the lexicographic product, one gets $b=3$. Since $(X,\{\Delta_i\}_{i=0}^3\})$ is a non-symmetric association scheme, from Theorem \ref{drdg} again, $\Delta$ is a long distance-regular digraph of girth $3$, contrary to the fact that $\mathfrak{X}$ is not $P$-polynomial.

\textbf{Case 2.} $p_{1,1}^{1^{*}}=p_{1,1}^{1}+1$.

Substituting $p_{1,1}^{1^{*}}=p_{1,1}^{1}+1$ into \eqref{bkb}, one obtains $-p_{1,1^{*}}^{3} k_3=0$. It follows that $p_{1,1^{*}}^{3}=0$, and so $R_1R_{1^*}\subseteq\{R_0,R_1,R_{1^*}\}$. Since $p_{1,1}^3=0$, we obtain $R_1^2\subseteq\{R_1,R_{1^*}\}$. By induction, we get $R_1^h\subseteq\{R_0,R_1,R_{1^*}\}$ for $h>0$, and so $\langle R_1\rangle=\{R_0,R_1,R_{1^*}\}$. Then $p_{l,1}^{3}=p_{l,1^{*}}^{3}=0$ for $l\in \{0,1,2\}$. By Lemma \ref{jb} \ref{jb-3}, we have $k_l=p_{l,3}^{3}$ for $l\in \{0,1,2\}$, which implies $A_lA_3=k_lA_3$ from Lemma \ref{jb} \ref{jb-1}. Thus, $\sum_{l=0}^{2}A_lA_3=\sum_{l=0}^{2}k_lA_3$.

In view of Theorem \ref{th2.3}, $(\langle R_1\rangle(x),\{R_i\cap([\langle R_1\rangle(x)]\times [\langle R_1\rangle(x)])\}_{i=0}^2)$ is a subscheme of $\mathfrak{X}$ for $x\in X$. By the definition of the quotient scheme, $(X/\langle R_1\rangle, \{R_i\}_{i=0}^3 \sslash \langle R_1\rangle)$ is a $1$-class association scheme. By the commutativity of $\mathfrak{X}$, $\mathfrak{X}$ is a wedge product of subschemes $(\langle R_1\rangle(x),\{R_i\cap([\langle R_1\rangle(x)]\times [\langle R_1\rangle(x)])\}_{i=0}^2)$ for each $x\in X$ and a $1$-class association scheme, a contradiction.
\end{proof}

\begin{lemma}\label{lem3.8}
Suppose that $\mathfrak{X}$ is the attached scheme of $\Gamma$. Then $d=3$, and exactly one of the following holds:
\begin{enumerate}
\item\label{lem3.7-1} $A\Gamma=R_i\cup R_3$;
	
\item\label{lem3.7-2} $A\Gamma=R_i$, and $\mathfrak{X}$ is neither a $P$-polynomial association scheme nor a wedge product of at least two $2$-class association schemes with the same parameters and a $1$-class association scheme.
\end{enumerate}
Here, $i\in\{1,2\}$.
\end{lemma}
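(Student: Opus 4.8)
The plan is to first pin down the arc set $A\Gamma$ and then treat the two possible shapes separately.

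Since $\mathfrak{X}=\mathfrak{X}(\Gamma)$ has $3$ classes, $|\tilde{\partial}(\Gamma)|=4$ and hence $d=3$. To identify $A\Gamma=\Gamma_{(1,1)}\cup\Gamma_{(1,2)}$, I would use \eqref{two way distance set}: there $\Gamma_{(1,2)}\neq\emptyset$, and being a non-symmetric relation of $\mathfrak{X}$ it equals $R_{i}$ for some $i\in\{1,2\}$ by Fact \ref{sub}. Again by \eqref{two way distance set} and $|\tilde{\partial}(\Gamma)|=4$, exactly one of $(1,1)$ and $(2,2)$ lies in $\tilde{\partial}(\Gamma)$. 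If $(1,1)\in\tilde{\partial}(\Gamma)$, then $\Gamma_{(1,1)}$ is the unique non-diagonal symmetric relation of $\mathfrak{X}$, i.e. $R_{3}$, so $A\Gamma=R_{i}\cup R_{3}$ and (i) holds (consistently with Lemma \ref{lem3.6}). If $(2,2)\in\tilde{\partial}(\Gamma)$, then $\Gamma_{(1,1)}=\emptyset$, so $A\Gamma=R_{i}$ and $R_{3}=\Gamma_{(2,2)}$, and it remains to verify the extra conditions of (ii). Since $k_{3}\geq1$ the two cases give distinct arc sets, so (i) and (ii) are mutually exclusive and precisely one of them holds once (ii) is verified.

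In the second case $A\Gamma=R_{i}$, and by Fact \ref{fac2} we may replace $\Gamma$ by $\Gamma^{\top}$ and assume $i=1$, so $\Gamma=(X,R_{1})$ with $R_{1}=\Gamma_{(1,2)}$, $R_{2}=\Gamma_{(2,1)}$, $R_{3}=\Gamma_{(2,2)}$. First I would show $\mathfrak{X}$ is not $P$-polynomial: otherwise, being non-symmetric and $P$-polynomial, $\mathfrak{X}$ would be the attached scheme of a distance-regular digraph $\Gamma'$ (as recalled in Section 2), which would then have diameter $3$; but the arc set of $\Gamma'$ is a non-symmetric relation of $\mathfrak{X}$ (else $\Gamma'$ would be undirected, forcing its scheme to be symmetric), hence $R_{1}$ or $R_{2}$, so $\Gamma'=(X,R_{1})=\Gamma$ or $\Gamma'=(X,R_{2})=\Gamma^{\top}$, both of diameter $2$, a contradiction. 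Second, I would show $\mathfrak{X}$ is not a wedge product of the stated form: since $R_{3}=\Gamma_{(2,2)}\neq\emptyset$, picking $(x,y)\in R_{3}$ and a path $x\to z\to y$ in $\Gamma$ gives $z\in R_{1}(x)\cap R_{2}(y)=P_{1,1}(x,y)$, so $p_{1,1}^{3}\neq0$ and $R_{3}\in R_{1}^{2}$, whence $\{R_{0},R_{1},R_{2}\}$ is not closed. But if $\mathfrak{X}$ were a wedge product of at least two $2$-class association schemes with the same parameters and a $1$-class association scheme, then, by the description of the wedge product in Section 2, the $2$-class factors would be the subschemes of $\mathfrak{X}$ corresponding to a closed subset $F$ with $|F|=3$; since a nonempty closed subset contains $R_{0}$ and, with each $R_{l}$, also $R_{l^{*}}$, and since $R_{1}^{\top}=R_{2}$, $R_{3}^{\top}=R_{3}$, the only size-$3$ candidate is $\{R_{0},R_{1},R_{2}\}$, already excluded. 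Hence (ii) holds, and the lemma follows.

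The bookkeeping of which of $(1,1)$, $(2,2)$ occurs, and of the identification of $R_{3}$, is routine. The part I expect to need the most care is the wedge-product step: one must match the informal description in (ii) to the definition of the wedge product recalled in Section 2 and observe that a decomposition of $\mathfrak{X}$ into at least two $2$-class subschemes and a $1$-class quotient forces the existence of a closed subset of size $3$, which then clashes with $R_{3}\in R_{1}^{2}$. The $P$-polynomial step is short once one notes that a distance-regular digraph has a non-symmetric arc set, so the digraph underlying any $P$-polynomial ordering of $\mathfrak{X}$ can only be $\Gamma$ or $\Gamma^{\top}$.
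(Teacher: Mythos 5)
Your proposal is correct and follows essentially the same route as the paper: identify $A\Gamma$ from the two-way distance partition, settle the case $A\Gamma=R_i\cup R_3$ immediately, and in the case $A\Gamma=R_i$ rule out $P$-polynomiality because the associated distance-regular digraph would have diameter $3$ while its arc set must be $R_1$ or $R_2$, and rule out the wedge product by forcing the relevant closed subset to be $\{R_0,R_1,R_2\}$. The only cosmetic difference is that you obtain the wedge-product contradiction from $p_{1,1}^{3}\neq 0$ (so $\{R_0,R_1,R_2\}$ is not closed), whereas the paper phrases the same underlying fact as $\langle R_1\rangle\neq\{R_l\}_{l=0}^{3}$ contradicting the strong connectivity of $\Gamma$.
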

\begin{proof}
Since the attached scheme of $\Gamma$ has $3$ classes, we have $d=3$. Then $\tilde{\partial}(\Gamma)= \{(0,0),(1,2),(2,1),(2,2)\}$ or $\tilde{\partial}(\Gamma)= \{(0,0),(1,2),(2,1),(1,1)\}$. It follows from Fact \ref{sub} that $A\Gamma=R_i$ or $A\Gamma=R_i\cup R_3$ for $i\in\{1,2\}$. If $A\Gamma=R_i\cup R_3$ for some $i\in\{1,2\}$, from Lemma \ref{lem3.6}, then \ref{lem3.7-1} holds.

Suppose $A\Gamma=R_i$ for some $i\in \{1,2\}$. Note that $R_1$ and $R_2$ are the only pair of non-symmetric relations. If $\mathfrak{X}$ is a $P$-polynomial association scheme, from Theorem \ref{drdg}, then $\Gamma$ is a short distance-regular digraph of girth $4$ or a long distance-regular digraph of girth $3$, which implies that $\Gamma$ has diameter $3$, a contradiction. If $\mathfrak{X}$ is a wedge product of at least two $2$-class association schemes with the same parameters and a $1$-class association scheme, then $\langle R_1\rangle\neq \{R_i\}_{i=0}^3$, which implies that $\Gamma$ is not strongly connected, a contradiction. By Lemma \ref{lem3.7}, \ref{lem3.7-2} is valid.

This completes the proof of this lemma.
\end{proof}

\subsection{The attached scheme of $\Gamma$ has $4$ classes} In this subsection, we consider the case that the attached scheme of $\Gamma$ has $4$ classes.
\begin{lemma}\label{lem3.9}
Suppose $d=4$ and $j\in\{3,4\}$. If $\mathfrak{X}$ is neither a wreath product of a $1$-class association scheme and a $P$-polynomial association scheme nor a wedge product of subschemes $(\langle R_1,R_j\rangle(x),\{R_h\cap ([\langle R_1,R_j\rangle(x)]\times [\langle R_1,R_j\rangle(x)])\}_{h\in \{0,1,2,j\}})$ for each $x\in X$ and a $1$-class association scheme, then $(X,R_i\cup R_j)$ is a weakly distance-regular digraph of diameter $2$ with $\mathfrak{X}$ as the attached scheme for $i\in\{1,2\}$.
\end{lemma}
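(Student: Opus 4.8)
Put $\Delta:=(X,R_1\cup R_j)$ and $\{j,j'\}=\{3,4\}$; by Fact~\ref{fac2} it suffices to treat $i=1$. Exactly as in the proofs of Lemmas~\ref{lem3.3}, \ref{lem3.6} and \ref{lem3.7}, the set of pairs joined by a directed $\Delta$-path of length at most $m$ is a union of relations of $\mathfrak{X}$ for every $m$ (reachability in $\le m{+}1$ steps from $(x,y)\in R_h$ counts as $\sum_{R_i,\,e\in\{1,j\}}p^h_{i,e}$ over the relations $R_i$ reachable in $\le m$ steps, which depends only on $h$), so $\partial_{\Delta}$ is constant on each $R_l$; since $R_1$ is non-symmetric and $R_j,R_{j'}$ are symmetric with $R_j\subseteq A\Delta$ and $R_{j'}\cap A\Delta=\emptyset$, we get $R_j=\Delta_{(1,1)}$, $R_1=\Delta_{(1,a)}$, $R_2=\Delta_{(a,1)}$ and $R_{j'}=\Delta_{(b,b)}$ with $a,b>1$. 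Thus $\Delta$ is a weakly distance-regular digraph of diameter $2$ with attached scheme $\mathfrak{X}$ if and only if $a=b=2$, equivalently $R_2\in(R_1\cup R_j)^2$ and $R_{j'}\in(R_1\cup R_j)^2$ (which also forces $\Delta$ to be strongly connected). So the plan is to assume $R_2\notin(R_1\cup R_j)^2$ or $R_{j'}\notin(R_1\cup R_j)^2$ and deduce that $\mathfrak{X}$ is one of the two excluded schemes.

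First I would dispose of the wedge-product alternative. Suppose $\langle R_1,R_j\rangle\neq R$. A closed subset is closed under ${}^{*}$ (multiply by $R_0$), so $\langle R_1,R_j\rangle$ is ${}^{*}$-closed, product-closed, contains $R_0,R_1,R_2,R_j$, and omits $R_{j'}$; hence $\langle R_1,R_j\rangle=\{R_0,R_1,R_2,R_j\}=:F$. For $R_l\in F$, if $p_{l,j'}^{m}\neq0$ with $R_m\in F$ then Lemma~\ref{jb}\ref{jb-2} gives $p_{l^{*},m}^{j'}\neq0$, i.e.\ $R_{j'}\in R_{l^{*}}R_m\subseteq F$, a contradiction; hence $R_lR_{j'}=\{R_{j'}\}$, so $p_{l,j'}^{j'}=k_l$ by Lemma~\ref{jb}\ref{jb-3}, that is $A_lA_{j'}=k_lA_{j'}$. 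Summing over $R_l\in F$ and using commutativity gives condition (a), and condition (b) is commutativity; by Theorem~\ref{th2.3} the $(F(x),\{R_h\cap([F(x)]\times[F(x)])\}_{h\in\{0,1,2,j\}})$ are subschemes and $R\sslash F=\{R_0^{F},R_{j'}^{F}\}$ makes $(X/F,R\sslash F)$ a $1$-class scheme, so $\mathfrak{X}$ is precisely the excluded wedge product. Hence from now on $\langle R_1,R_j\rangle=R$.

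Now suppose $R_2\notin(R_1\cup R_j)^2$ or $R_{j'}\notin(R_1\cup R_j)^2$; note $R_1^2\neq\{R_1\}$, since otherwise $p_{1,1}^1=k_1$ by Lemma~\ref{jb}\ref{jb-1} while $p_{1,0}^1=1$, contradicting Lemma~\ref{jb}\ref{jb-3}. If $R_2\notin(R_1\cup R_j)^2$ (the case $a>2$), then $p_{e,f}^{2}=0$ for all $e,f\in\{1,j\}$; pushing these vanishings through Lemma~\ref{jb}\ref{jb-2} and \ref{jb-4}, and using $\langle R_1,R_j\rangle=R$, I would obtain $\langle R_j\rangle=\{R_0,R_j\}$ and $p_{1,1^{*}}^{j}=k_1$. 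Then $A\Delta=R_1\cup R_j$ means $\Delta=\Delta_j$ in the notation of Lemma~\ref{lem}, which applies with $a:=j$ and, by Lemma~\ref{lem}\ref{lem-2}, exhibits $\Delta$ as the lexicographic product of $\Lambda:=\Delta/\langle R_j\rangle$ with $K_{k_j+1}$. From $p_{1,1^{*}}^{j}=k_1$ (together with its commutative twin $p_{1^{*},1}^{j}=k_1$) one deduces $R_1(x)=R_1(y)$ and $R_2(x)=R_2(y)$ for $(x,y)\in R_j$, hence $R_{j'}(x)=R_{j'}(y)$ as well, i.e.\ $\langle R_j\rangle R_l=\{R_l\}$ for every $R_l\notin\langle R_j\rangle$; so the two-way-distance classes of $\Lambda$ are exactly $R\sslash\langle R_j\rangle$, whence $\Lambda$ is weakly distance-regular with attached scheme the non-symmetric $3$-class scheme $\mathfrak{X}\sslash\langle R_j\rangle$, which is therefore a distance-regular digraph by Theorem~\ref{drdg} and hence $P$-polynomial, and $\mathfrak{X}$ is the wreath product of the $1$-class scheme on a $\langle R_j\rangle$-coset and $\mathfrak{X}\sslash\langle R_j\rangle$ — the excluded structure. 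If instead $R_2\in(R_1\cup R_j)^2$ but $R_{j'}\notin(R_1\cup R_j)^2$ (the case $a=2$, $b>2$), then $p_{1,1}^{j'}=0$, and following the parametrisation of Lemma~\ref{lem3.7} (expanding Lemma~\ref{jb}\ref{jb-1} at $(1,1)$ and $(1,1^{*})$, using Lemma~\ref{jb}\ref{jb-2}, and Lemma~\ref{jb}\ref{jb-4} at $(h,m,e,l)=(1,1,1,1^{*})$) I would arrive at a factored identity whose branches are either $p_{1,1^{*}}^{j'}=k_1$ — giving $\langle R_{j'}\rangle=\{R_0,R_{j'}\}$ and, as above with $\langle R_{j'}\rangle$ in place of $\langle R_j\rangle$, the excluded wreath product — or $p_{1,1}^{1^{*}}=p_{1,1}^{1}+1$ together with $p_{1,1^{*}}^{j'}=0$, whence $\langle R_1\rangle=\{R_0,R_1,R_2\}$ and, after the further vanishings $p_{l,j}^{j'}=0$ for $l\in\{1,2,j\}$, $\langle R_1,R_j\rangle=\{R_0,R_1,R_2,R_j\}\neq R$, contradicting the previous paragraph. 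In every case $\mathfrak{X}$ is excluded, so $a=b=2$ and the proof is complete.

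The main obstacle is the intersection-number bookkeeping in the third paragraph: extracting, from the single combinatorial hypothesis $R_2\notin(R_1\cup R_j)^2$ (respectively the non-degenerate branch of the factored identity), the precise facts $\langle R_j\rangle=\{R_0,R_j\}$ (resp.\ $\langle R_{j'}\rangle=\{R_0,R_{j'}\}$) and the twin condition $p_{1,1^{*}}^{j}=k_1$, and then verifying that this twin condition upgrades mere $\langle R_j\rangle$-fibering to a genuine wreath product so that Theorem~\ref{drdg} can be invoked. Compared with Lemma~\ref{lem3.7}, the second symmetric relation $R_{j'}$ means the coefficients $p_{1,1}^{j}$ and $p_{j,j}^{j'}$ (the latter controlling whether $\langle R_j\rangle$ is $\{R_0,R_j\}$ or $\{R_0,R_j,R_{j'}\}$) need not vanish; carrying them through the computation, the case split, and the identification of each degenerate outcome with exactly the stated wreath or wedge product, is where essentially all the work lies.
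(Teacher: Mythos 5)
Your overall frame (reduce to $a=b=2$; if $\langle R_1,R_j\rangle\neq R$ conclude the excluded wedge product; otherwise aim at the excluded wreath product) agrees with the paper, and your wedge-product paragraph is sound — indeed it packages the paper's Case 2.1 more cleanly. But the third paragraph, which you yourself identify as carrying ``essentially all the work,'' is a plan rather than a proof, and its sketched route does not survive contact with the actual intersection-number structure. In the case $R_2\notin(R_1\cup R_j)^2$ you assert that ``pushing the vanishings through'' Lemma~\ref{jb}\ref{jb-2} and \ref{jb-4} yields $\langle R_j\rangle=\{R_0,R_j\}$ and $p_{1,1^*}^{j}=k_1$; in the paper this is the bulk of Case 1: one must first force $b=2$ and $a=3$ (via $k_1^2=p_{1,1}^1k_1+p_{1,1}^{j'}k_{j'}$ and Lemma~\ref{jb}\ref{jb-3}), then prove $p_{1,1}^{j'}\neq 0$, $p_{1,j'}^{j}=0$ and $p_{j',j}^{j}=0$ using Lemma~\ref{jb}\ref{jb-4} at $(3,1,1,1^*)$ and $(3,1,1,3)$, and only then does the closure of $\{R_0,R_j\}$ and $p_{1,1^*}^{j}=k_1$ follow. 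Similarly, your inference that the quotient $\Lambda$ ``is therefore a distance-regular digraph by Theorem~\ref{drdg}'' is not automatic for a weakly distance-regular digraph with a non-symmetric $3$-class attached scheme; one needs the one-way distances of the classes to be pairwise distinct (the paper gets $\tilde{\partial}(\Lambda)=\{(0,0),(1,3),(3,1),(2,2)\}$, resp.\ $\{(0,0),(1,2),(2,1),(3,3)\}$, precisely by first pinning down $a$ and $b$), a step you never take.

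The case $a=2$, $b>2$ contains a concretely false step. Your claimed dichotomy is modelled on Lemma~\ref{lem3.7}, but the identity $k_1=p_{1,1}^1+p_{1,1}^{1^*}$ underlying that factorization fails here because $p_{1,1}^{j}$ need not vanish; the paper's split is different, namely $p_{j,j}^{1^*}p_{1,1^*}^{j'}=0$ from Lemma~\ref{jb}\ref{jb-4} at $(4,1,3,3)$, with cases $p_{1,1^*}^{j'}=0$ (wedge) and $p_{1,1^*}^{j'}\neq0$ (wreath, after showing $b=3$). More importantly, your first branch claims that $p_{1,1^*}^{j'}=k_1$ forces $\langle R_{j'}\rangle=\{R_0,R_{j'}\}$ and a wreath decomposition over $\langle R_{j'}\rangle$-fibers. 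The Lemma~\ref{lem3.7} argument only excludes $R_1,R_2$ from $R_{j'}^2$; with the second symmetric relation present you cannot exclude $R_j$, and in the very structures this lemma must rule out — $\Delta$ a lexicographic product of a long distance-regular digraph of girth $3$ and a complete graph, the outcome of the paper's Case 2.2 — one has $p_{1,1^*}^{j'}=k_1$ while $R_j\in R_{j'}^2$, so $\langle R_{j'}\rangle=\{R_0,R_j,R_{j'}\}$. The correct fibering there is over $\langle R_j\rangle=\{R_0,R_j\}$ (with $p_{1,1^*}^{j}=k_1$, obtained from the chain $p_{j,j}^{1^*}=0$, $p_{1,j}^{1^*}=0$, etc.), not over $\langle R_{j'}\rangle$. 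So as written this branch would not deliver the excluded wreath product, and the gap is exactly the bookkeeping you deferred.
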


\begin{proof}
Let $\Delta=(X,R_1\cup R_j)$. Since the proofs are similar, we may assume that $j=3$. Since $R_1$ is non-symmetric and $R_3$, $R_4$ are symmetric, we have $R_1=\Delta_{(1,a)}$  with $a>1$, $R_3=\Delta_{(1,1)}$ and $R_4=\Delta_{(b,b)}$  with $b>1$. If $a=b=2$, then $\Delta$ is a weakly distance-regular digraph of diameter $2$ with the attached scheme $(X,\{\Delta_{(0,0)},\Delta_{(1,2)},\Delta_{(2,1)},\Delta_{(1,1)},\Delta_{(2,2)}\})$, which implies that the desired result is valid from Fact \ref{fac2}. Suppose, to the contrary that $a>2$ or $b>2$.

\textbf{Case 1.} $a>2$.

Since $A\Delta=R_1\cup R_3$, one has $p_{1,1}^{1^{*}}=p_{1,3}^{1^{*}}=p_{3,3}^{1^{*}}=0$. In view of Lemma \ref{jb} \ref{jb-2}, we have $p_{1,1}^{3}=p_{1,3}^{1^{*}}=0$. By setting $(i,j)=(1,1)$ in Lemma \ref{jb} \ref{jb-1}, we obtain
\begin{align}
k_1^2=p_{1,1}^1k_1+p_{1,1}^4k_4.\label{eq5}
\end{align}
If $b>2$, then $p_{1,1}^{4}=0$, and so $k_1^2=p_{1,1}^1k_1$. This implies that $p_{1,1}^1=k_1$, and so $p_{1,1}^1+p_{1,0}^1=k_{1}+1$, contrary to Lemma \ref{jb} \ref{jb-3}. Therefore, $b=2$. Note that the induced subdigraph of $\Delta$ on the vertex set $\langle R_1,R_3 \rangle(x)$ is strongly connected for all $x\in X$. Since $\tilde{\partial}(\Delta)=\{(0,0),(1,a),(a,1),(1,1),(2,2)\}$, one gets $a=3$. Then $\Delta$ is a weakly distance-regular digraph with the attached scheme $$(X,\{\Delta_{(0,0)},\Delta_{(1,3)},\Delta_{(3,1)},\Delta_{(1,1)},\Delta_{(2,2)}\}).$$

By setting $(i,j)=(1,1^*)$ in Lemma \ref{jb} \ref{jb-1}, we obtain $k_1^2=k_{1}k_{1^{*}}=k_1+p_{1,1^{*}}^{1}k_1+p_{1,1^{*}}^{1^*}k_{1^*}+p_{1,1^{*}}^{3}k_3+p_{1,1^{*}}^{4}k_4$. Lemma \ref{jb} \ref{jb-2} implies $p^{1}_{1,1}=p^{1^{*}}_{1,1^{*}}$. By \eqref{eq5}, we have
\begin{align}
	p_{1,1}^{4}k_4=k_1^2-p_{1,1}^1k_1=k_1+p_{1,1^{*}}^{1}k_1+p_{1,1^{*}}^{3}k_3+p_{1,1^{*}}^{4}k_4,\label{eq4}
\end{align}
which implies $p_{1,1}^{4}\neq0.$

Since $p_{1,1}^{1^*}=p_{1,1}^3=0$, by setting $(h,m,e,l)=(3,1,1,1^{*})$ in Lemma \ref{jb} \ref{jb-4}, we get
\begin{align}
p_{1,1^{*}}^{1^{*}}p_{1,1^{*}}^{3}+p_{1,1^{*}}^{3}p_{1,3}^{3}+p_{1,1^{*}}^{4}p_{1,4}^{3}=p_{1,1}^{1}p_{1,1^{*}}^{3}+p_{1,1}^{4}p_{4,1^{*}}^{3}.\label{eq6}
\end{align}
In view of Lemma \ref{jb} \ref{jb-2}, one has $ p_{1,3}^{3}=p_{3,3}^{1^{*}}=0$. By substituting $ p_{1,3}^{3}=0$, $p_{1,4}^{3}=p_{4,1^{*}}^{3}$ and $p_{1,1}^{1}=p_{1,1^{*}}^{1^{*}}$ from Lemma \ref{jb} \ref{jb-2} into \eqref{eq6}, one obtains $p_{1,1^{*}}^{4}p_{1,4}^{3}=p_{1,1}^{4}p_{1,4}^{3}.$
If $p_{1,4}^{3}\neq0$, then $p_{1,1^{*}}^{4}=p_{1,1}^{4},$ which implies $k_1+p_{1,1^{*}}^{1}k_1+p_{1,1^{*}}^{3}k_3=0$ from \eqref{eq4}, a contradiction. Thus $p_{1,4}^{3}=0.$

Since $p_{1,1}^{1^{*}}=p_{1,1}^{3}=0$, by setting $(h,m,e,l)=(3,1,1,3)$ in Lemma \ref{jb} \ref{jb-4}, we get $$p_{1,3}^{1^*}p_{1,1^*}^{3}+p_{1,3}^{3}p_{1,3}^{3}+p_{1,3}^{4}p_{1,4}^{3}=p_{1,1}^{1}p_{1,3}^{3}+p_{1,1}^{4}p_{4,3}^{3}.$$
By substituting $p_{1,3}^{1^{*}}=p_{1,3}^{3}=p_{1,4}^{3}=0$ into the above equation, we obtain $p_{1,1}^{4}p_{4,3}^{3}=0$. Since $p_{1,1}^{4}\neq0$, one obtains $p_{4,3}^{3}=0$. In view of Lemma \ref{jb} \ref{jb-2}, we have $p_{3,3}^{4}=p_{4,3}^{3}=0$. Since $p_{3,3}^{1^{*}}=p_{3,3}^{1}=0$, one gets $R_3^2\subseteq\{R_0,R_3\}$. By induction, we get $R_3^h\subseteq\{R_0,R_3\}$ for $h>0$, which implies $\langle R_3\rangle =\{R_0,R_3\}$. Since $p_{1,1}^3=p_{1,3}^3=p_{1,4}^3=0$, we have $p_{1,1^{*}}^{3}=k_1$ by Lemma \ref{jb} \ref{jb-3}.

Let $\Lambda=\Delta/\langle R_3\rangle$. Note that $\Delta$ is a weakly distance-regular digraph of $\tilde{\partial}(\Delta)=\{(0,0),(1,3),(3,1),(1,1),(2,2)\}$. By Lemma \ref{lem} \ref{lem-2}, $\Delta$ is a lexicographic product of $\Lambda$ and a complete graph, where $\Lambda$ is a weakly distance-regular digraph. Since $R_3=\Delta_{(1,1)}$, from the definition of the lexicographic product, one gets $\tilde{\partial}(\Lambda)=\{(0,0),(1,3),(3,1),(2,2)\}$. By Theorem \ref{drdg}, $\Lambda$ is a short distance-regular digraph of girth $4$. It follows that $\mathfrak{X}$ is a wreath product of a $1$-class association scheme and a $P$-polynomial association scheme, a contradiction.

\textbf{Case 2.} $a=2$.

Note that $b>2$. The fact $R_4=\Delta_{(b,b)}$ implies $p_{1,1}^{4}=p_{1,3}^{4}=p_{3,3}^{4}=0$. By setting $(h,m,e,l)=(4,1,3,3)$ in Lemma \ref{jb} \ref{jb-4}, we have $$p_{3,3}^{1}p_{1,1}^{4}+p_{3,3}^{1^{*}}p_{1,1^{*}}^{4}+p_{3,3}^{3}p_{1,3}^{4}+p_{3,3}^{4}p_{1,4}^{4}=p_{1,3}^{1}p_{1,3}^{4}+p_{1,3}^{1^{*}}p_{1^{*},3}^{4}+p_{1,3}^{3}p_{3,3}^{4}+p_{1,3}^{4}p_{4,3}^{4},$$
and so $p_{3,3}^{1^{*}}p_{1,1^{*}}^{4}=p_{1,3}^{1^{*}}p_{1^{*},3}^{4}.$ In view of Lemma \ref{jb} \ref{jb-2}, one has $p_{1^{*},3}^{4}=p_{1,3}^{4}=0$, which implies $p_{3,3}^{1^{*}}p_{1,1^{*}}^{4}=0.$

\textbf{Case 2.1.} $p_{1,1^{*}}^{4}=0$.

Since $p_{1,1}^4=p_{1,3}^{4}=p_{3,3}^{4}=0$, we obtain $$\{R_1,R_3\}^2= R_1^2\cup R_1R_3 \cup R_3^2\subseteq\{R_0,R_1,R_1^*,R_3\}.$$
Since $p_{1,1^{*}}^{4}=0$, by induction, we get ${(\{R_1,R_3\})}^h\subseteq\{R_0,R_1,R_{1^*},R_3\}$ for $h>0$, which implies $\langle R_1,R_3\rangle=\{R_0,R_1,R_{1^*},R_3\}.$ Then $p_{l,1}^{4}=p_{l,1^{*}}^{4}=p_{l,3}^{4}=0$ for $l\in \{0,1,2,3\}$. By Lemma \ref{jb} \ref{jb-3}, we have $k_l=p_{l,4}^{4}$ for $l\in \{0,1,2,3\}$, which implies $A_lA_4=k_lA_4$ from Lemma \ref{jb} \ref{jb-1}. Thus, $\sum_{l=0}^{3}A_lA_4=\sum_{l=0}^{3}k_lA_4$.

By Theorem \ref{th2.3}, $(\langle R_1,R_3\rangle(x),\{R_h\cap([\langle R_1,R_3\rangle(x)]\times [\langle R_1,R_3\rangle(x)])\}_{h=0}^3)$ is a subscheme of $\mathfrak{X}$ for every $x\in X$. In view of the definition of the quotient scheme, $(X/\langle R_1,R_3\rangle, \{R_i\}_{i=0}^4 \sslash \langle R_1,R_3\rangle)$ is a $1$-class association scheme. By the commutativity of $\mathfrak{X}$, $\mathfrak{X}$ is a wedge product of subschemes $(\langle R_1,R_3\rangle(x),\{R_h\cap([\langle R_1,R_3\rangle(x)]\times [\langle R_1,R_3\rangle(x)])\}_{h=0}^3)$ for each $x\in X$ and a $1$-class association scheme, a contradiction.

\textbf{Case 2.2.} $p_{1,1^{*}}^{4}\neq 0$.

Let $(x,y)\in R_4$. Since $p_{1,1^{*}}^{4}\neq 0$, there exists $z\in P_{1,1^*}(x,y)$. The fact $R_1=\Delta_{(1,2)}$ implies $b=\partial_{\Delta}(x,y)\leq \partial_{\Delta}(x,z)+\partial_{\Delta}(z,y)=3$, and so $R_4=\Delta_{(3,3)}$. It follows that $\Delta$ is a weakly distance-regular digraph with $\tilde{\partial}(\Delta)=\{(0,0),(1,2),(2,1),(1,1),(3,3)\}$.

Since $p_{3,3}^{1^{*}}p_{1,1^{*}}^{4}=0$, one has $p_{3,3}^{1^{*}}=0$. In view of Lemma \ref{jb} \ref{jb-2}, we obtain $p_{3,3}^{1}=0$. Since $p_{3,3}^{4}=0$, one gets $R_3^2\subseteq\{R_0,R_3\}$. By induction, we get $R_3^h\subseteq\{R_0,R_3\}$ for $h>0$, which implies $\langle R_3\rangle =\{R_0,R_3\}$. By setting $(h,m,e,l)=(4,1,1,3)$ in Lemma \ref{jb} \ref{jb-4}, we have $$p_{1,3}^{1}p_{1,1}^{4}+p_{1,3}^{1^{*}}p_{1,1^{*}}^{4}+p_{1,3}^{3}p_{1,3}^{4}+p_{1,3}^{4}p_{1,4}^{4}=p_{1,1}^{1}p_{1,3}^{4}+p_{1,1}^{1^{*}}p_{1^{*},3}^{4}+p_{1,1}^{3}p_{3,3}^{4}+p_{1,1}^{4}p_{4,3}^{4}.$$
By substituting $p_{1,1}^{4}=p_{1,3}^{4}=p_{1^{*},3}^{4}=p_{3,3}^{4}=0$ into the above equation, we obtain $p_{1,3}^{1^{*}}p_{1,1^{*}}^{4}=0,$
which implies $p_{1,3}^{1^{*}}=0$. In view of Lemma \ref{jb} \ref{jb-2}, we have $p_{1,1}^{3}=p_{1,3}^{1^{*}}=0$, $p_{1,3}^3=p_{3,3}^{1^{*}}=0$ and $p_{1,4}^3=p_{1,3}^4=0$. By setting $(i,h)=(1,3)$ in Lemma \ref{jb} \ref{jb-3}, one obtains $p_{1,1^{*}}^{3}=k_1$.

Let $\Lambda=\Delta/\langle R_3\rangle$. Note that $\Delta$ is a weakly distance-regular digraph with $\tilde{\partial}(\Delta)=\{(0,0),(1,2),(2,1),(1,1),(3,3)\}$. By Lemma \ref{lem} \ref{lem-2}, $\Delta$ is a lexicographic product of $\Lambda$ and a complete graph, where $\Lambda$ is weakly distance-regular. Since $R_3=\Delta_{(1,1)}$, from the definition of the lexicographic product, we have $\tilde{\partial}(\Lambda)=\{(0,0),(1,2),(2,1),(3,3)\}$. By Theorem \ref{drdg}, $\Lambda$ is a long distance-regular digraph of girth $3$. Thus, $\mathfrak{X}$ is a wreath product of a $1$-class association scheme and a $P$-polynomial association scheme, a contradiction.
\end{proof}

\begin{lemma}\label{lem3.10}
Suppose that $\mathfrak{X}$ is the attached scheme of $\Gamma$.	Then $d=4$,  $A\Gamma=R_i\cup R_j$ for some $i\in\{1,2\}$ and $j\in\{3,4\}$, and $\mathfrak{X}$ is neither
\begin{enumerate}
\item\label{lem3.9-1} a wreath product of a $1$-class association scheme and a $P$-polynomial association scheme, nor
	
\item\label{lem3.9-2} a wedge product of subschemes $$(\langle R_1,R_j\rangle(x),\{R_h\cap ([\langle R_1,R_j\rangle(x)]\times [\langle R_1,R_j\rangle(x)])\}_{h\in \{0,1,2,j\}})$$ for each $x\in X$ and a $1$-class association scheme.
\end{enumerate}
\end{lemma}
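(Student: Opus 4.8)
The plan is to follow the template of Lemmas \ref{lem3.4} and \ref{lem3.8}: first fix the combinatorial shape of $\Gamma$, and then show that each of the two prohibited product decompositions would force either a failure of strong connectivity or a diameter larger than $2$. For the structural part, since the attached scheme of $\Gamma$ has four classes, $d=4$, and \eqref{two way distance set} gives $\tilde{\partial}(\Gamma)=\{(0,0),(1,2),(2,1),(1,1),(2,2)\}$. As $R_1^\top=R_2$ is the only non-symmetric pair, this forces $\{R_1,R_2\}=\{\Gamma_{(1,2)},\Gamma_{(2,1)}\}$ and $\{R_3,R_4\}=\{\Gamma_{(1,1)},\Gamma_{(2,2)}\}$. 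Since $A\Gamma=\{(x,y):\partial_\Gamma(x,y)=1\}=\Gamma_{(1,2)}\cup\Gamma_{(1,1)}$, Fact \ref{sub} yields $A\Gamma=R_i\cup R_j$ with $R_i=\Gamma_{(1,2)}$ (so $i\in\{1,2\}$) and $R_j=\Gamma_{(1,1)}$ (so $j\in\{3,4\}$). Passing to the transpose digraph $(X,(A\Gamma)^\top)=(X,R_1\cup R_j)$ via Fact \ref{fac2} (legitimate because $R_j$ is symmetric), I may and will assume $R_i=R_1=\Gamma_{(1,2)}$.

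Next I would rule out the wedge product, reusing the end of the proof of Lemma \ref{lem3.8}. If $\mathfrak{X}$ were the wedge product of the subschemes on $\langle R_1,R_j\rangle(x)$ and a $1$-class scheme, then $\langle R_1,R_j\rangle$ would be a \emph{proper} closed subset of $R$ (the $1$-class quotient being non-trivial), so $\langle R_1,R_j\rangle(x)\subsetneq X$ for each $x$. But $A\Gamma=R_1\cup R_j\subseteq\bigcup_{f\in\langle R_1,R_j\rangle}f$ and $\langle R_1,R_j\rangle$ is closed, so every arc-path of $\Gamma$ starting at $x$ stays inside $\langle R_1,R_j\rangle(x)$; hence $\Gamma$ is not strongly connected, a contradiction.

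The substantive case is the wreath product. If $\mathfrak{X}$ were a wreath product of a $1$-class scheme and a $P$-polynomial scheme $\mathfrak{B}$, then since $\mathfrak{X}$ has four classes $\mathfrak{B}$ has three; since $\mathfrak{X}$ has a non-symmetric pair while the $1$-class factor is symmetric, $\mathfrak{B}$ is non-symmetric, so by the discussion preceding Theorem \ref{drdg} it is the attached scheme of a distance-regular digraph of diameter $3$. The $1$-class factor is a subscheme on $\langle R_a\rangle(x)$ for a symmetric relation $R_a$ with $a\in\{3,4\}$, $\langle R_a\rangle=\{R_0,R_a\}$, $p_{1,1^*}^{a}=k_1$, and the quotient scheme of $\mathfrak{X}$ over $\langle R_a\rangle$ isomorphic to $\mathfrak{B}$; in particular the hypotheses of Lemma \ref{lem} hold. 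The intended conclusion is: show $R_a=R_j$, so that $\Gamma=\Delta_a$; then Lemma \ref{lem} \ref{lem-2} presents $\Gamma$ as the lexicographic product of $\Gamma/\langle R_a\rangle$ with $K_{k_a+1}$, where $\Gamma/\langle R_a\rangle$ is weakly distance-regular with attached scheme $\mathfrak{B}$, hence a distance-regular digraph of diameter $3$; but the lexicographic product of a diameter-$3$ digraph with a complete graph still has diameter $3$, contradicting that $\Gamma$ has diameter $2$.

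The main obstacle is exactly the claim $R_a=R_j$, i.e.\ that the within-fibre relation of the wreath decomposition must be $\Gamma_{(1,1)}\subseteq A\Gamma$ rather than $\Gamma_{(2,2)}$. When $R_a=\Gamma_{(2,2)}\notin A\Gamma$, Lemma \ref{lem} no longer applies to $\Gamma$ directly: instead $\Gamma$ is the lexicographic product of $\Gamma/\langle R_a\rangle$ with the \emph{empty} graph $\overline{K}_{k_a+1}$, and the arc set of $\Gamma/\langle R_a\rangle$ is the image of $R_1\cup R_j$, which may be the union of the non-symmetric relation of $\mathfrak{B}$ and a symmetric non-trivial relation of $\mathfrak{B}$ — in which case $\Gamma/\langle R_a\rangle$ need not have diameter $3$. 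I would try to exclude this configuration by a direct intersection-number computation in the spirit of Cases 1 and 2 of the proof of Lemma \ref{lem3.9}: using $R_1=\Gamma_{(1,2)}\subseteq A\Gamma$, $R_a=\Gamma_{(2,2)}$, $\langle R_a\rangle=\{R_0,R_a\}$ and $p_{1,1^*}^{a}=k_1$, analyse paths of length $2$ in $\Gamma$ to locate $R_a$ inside $\tilde{\partial}(\Gamma)$ and to compute $\langle R_1\rangle$, aiming to force either $R_a\subseteq A\Gamma$ after all or $\langle R_1\rangle\neq\{R_l\}_{l=0}^{4}$ (whence $\Gamma$ fails to be strongly connected). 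This is the step where I expect essentially all of the difficulty of the lemma to be concentrated.
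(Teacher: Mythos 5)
Your structural step and your wedge-product step coincide with the paper's own proof of Lemma \ref{lem3.10}: $d=4$ forces $\tilde{\partial}(\Gamma)=\{(0,0),(1,2),(2,1),(1,1),(2,2)\}$, Fact \ref{sub} gives $A\Gamma=R_i\cup R_j$, and a wedge decomposition would make $\langle R_1,R_j\rangle$ a proper closed subset containing $A\Gamma$, contradicting strong connectivity. The gap is the wreath-product case, which you explicitly leave unresolved: what you offer there is a plan (``analyse paths of length $2$ \ldots aiming to force either $R_a\subseteq A\Gamma$ or $\langle R_1\rangle\neq\{R_l\}_{l=0}^4$''), not an argument, so the proposal does not prove the lemma.

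That said, the obstacle you isolate is precisely the point the paper's own proof passes over, and the computation you hope for cannot succeed. The paper asserts that a wreath decomposition makes $\Gamma$ a lexicographic product of a diameter-$3$ distance-regular digraph and a complete graph; this tacitly assumes the $1$-class (fibre) relation of the wreath product is $\Gamma_{(1,1)}\subseteq A\Gamma$. In the configuration you flag, where the fibre relation is $\Gamma_{(2,2)}$, there is no contradiction to be had: take $\mathfrak{X}$ to be the wreath product of the $1$-class scheme on two points and the ($P$-polynomial, non-symmetric) scheme of the directed $4$-cycle, realized on $\mathbb{Z}_4\times\mathbb{Z}_2$ with relations determined by the difference $v-u$ of first coordinates (difference $1$ and $3$ giving the non-symmetric pair, difference $2$ and the same-fibre relation giving the symmetric ones), and let $A$ consist of all pairs with difference $1$ or $2$. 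This digraph has diameter $2$, and its two-way distance classes are exactly the relations of $\mathfrak{X}$ (the fibre relation is the $(2,2)$-class), so it is a weakly distance-regular digraph of diameter $2$ whose attached scheme is a wreath product of a $1$-class scheme and a $P$-polynomial scheme. Hence the case $R_a=\Gamma_{(2,2)}$ cannot be excluded by any intersection-number argument; the exclusion is only justified when the wreath fibre relation is the relation $R_j$ contained in $A\Gamma$, which is the situation actually produced in the proof of Lemma \ref{lem3.9}. In other words, your instinct about where the difficulty sits is correct, but it points to a defect in the statement of Lemma \ref{lem3.10} (and of Theorem \ref{th1} \ref{th1-4}) rather than to a computation you failed to carry out: the wreath-product exclusion needs to be formulated relative to $R_j$, and neither your plan nor the paper's one-line argument closes the case as stated.
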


\begin{proof}
Since the attached scheme of $\Gamma$ has $4$ classes, we have $d=4$. Then $\tilde{\partial}(\Gamma)= \{(0,0),(1,2),(2,1),(1,1),(2,2)\}$. It follows from Fact \ref{sub} that $\Gamma=(X,R_i\cup R_{j})$ for some $i\in\{1,2\}$ and $j\in\{3,4\}$.

If $\mathfrak{X}$ is a wreath product of a $1$-class association scheme and a $P$-polynomial association scheme, by Theorem \ref{drdg}, then $\Gamma$ is a lexicographic product of a short distance-regular digraph of girth $4$ or a long distance-regular digraph of girth $3$, and a complete graph, which implies that $\Gamma$ has diameter more than $2$, a contradiction. If $\mathfrak{X}$ is a wedge product of subschemes $(\langle R_1,R_j\rangle(x),\{R_h\cap ([\langle R_1,R_j\rangle(x)]\times [\langle R_1,R_j\rangle(x)])\}_{h\in \{0,1,2,j\}})$ for each $x\in X$ and a $1$-class association scheme, then $\langle R_1,R_j \rangle \neq \{R_h\}_{h=0}^4$, which implies that $\Gamma$ is not strongly connected, a contradiction.

The desired result follows from Lemma \ref{lem3.9}.
\end{proof}

Now, we are ready to prove Theorem \ref{th1}.

\begin{proof}[Proof of Theorem \ref{th1}]
We first prove the sufficiency. By Lemmas \ref{lem3.3}, \ref{lem3.6}, \ref{lem3.7} and \ref{lem3.9}, $(X,A)$ is a weakly distance-regular digraph of diameter $2$ with $\mathfrak{X}$ as its attached scheme, as desired.

We next prove the necessity. In view of \eqref{two way distance set}, the attached scheme of $(X,A)$ has exactly one pair of non-symmetric relations, and has at most $4$ classes. Without loss of generality, we may assume $R_1^{\top}=R_2$. Therefore, by Lemmas \ref{lem3.4}, \ref{lem3.8} and \ref{lem3.10}, the desired result follows.
\end{proof}

\section*{Data Availability Statement}

No data was used for the research described in the article.

\section*{Conflict of Interest}

The authors declare that they have no conflict of interest.

\section*{Acknowledgements}
				
Y.~Yang is supported by NSFC (12101575, 52377162).


\begin{thebibliography}{00}
\bibitem{ZA99} Z. Arad, E. Fisman and M. Muzychuk, Generalized table algebras,
\textit{Israel J. Math.}, 114 (1999), 29--60.

\bibitem{EB21} E. Bannai, E. Bannai, T. Ito and R. Tanaka, \textit{Algebraic Combinatorics}, Berlin, Boston: De Gruyter, 2021.

\bibitem{EB84} E. Bannai and T. Ito, \textit{Algebraic Combinatorics I: Association Schemes}, Benjamin/Cummings, California, 1984.

\bibitem{AEB98} A.E. Brouwer, A.M. Cohen and A. Neumaier,
\textit{Distance-Regular Graphs}, Springer-Verlag, New York, 1989.

\bibitem{AEB22} A.E. Brouwer and H.van Maldeghem, \textit{Strongly regular graphs, volume}, 182 of Encyclopedia of Mathematics and its Applications, Cambridge University Press, Cambridge, 2022.

\bibitem{PJ99} P.J. Cameron, \textit{Permutation groups}, volume 45 of London Mathematical Society Student Texts, Cambridge University Press, Cambridge, 1999.

\bibitem{DKT16} E.R. van Dam, J.H. Koolen and H. Tanaka, Distance-regular graphs,
\textit{Electron. J. Combin.}, (2016), DS22.

\bibitem{RMD81} R.M. Damerell, Distance-transitive and distance regular digraphs, \textit{J. Combin. Theory Ser. B}, 31 (1981), 46--53.

\bibitem{YF22} Y. Fan, Z. Wang and Y. Yang, Weakly distance-regular digraphs of one type of arcs, \textit{Graphs Combin.}, 38 (2022), Paper 89.
	
\bibitem{CG01} C. Godsil and G. Royle, \textit{Algebraic graph theory}, volume 207 of Graduate Texts in Mathematics, Springer-Verlag, New York, 2001.

\bibitem{HC75} D.G. Higman, Coherent configurations part I: ordinary representation theory, \textit{Geom. Dedic}., 4 (1975), 1--32.

\bibitem{LS} S. Li, Y. Yang and K. Wang, Semicomplete multipartite weakly distance-regular digraphs, \textit{arXiv}:~2501.10921.
	
\bibitem{M} M. Muzychuk, A wedge product of associaton schemes, \textit{European J. Combin.}, 30 (2009), 705--715.

\bibitem{JJ79} J.J. Seidel, \textit{Strongly regular graphs},  in: Surveys in combinatorics (Proc. Seventh British Combinatorial Conf., Cambridge, 1979), Cambridge Univ. Press, Cambridge-New York, volume 38 of London Math. Soc. Lecture Note Ser., 1979.
	
\bibitem{HS04} H. Suzuki, Thin weakly distance-regular digraphs,
\textit{J. Combin. Theory Ser. B}, 92 (2004), 69--83.

\bibitem{KSW03} K. Wang and H. Suzuki, Weakly distance-regular digraphs,
\textit{Discrete Math.}, 264 (2003), 225--236.
	
\bibitem{KSW04} K. Wang, Commutative weakly distance-regular digraphs of girth 2,
\textit{European J. Combin.}, 25 (2004), 363--375.

\bibitem{We76} B. Weisfeiler. \textit{On construction and identification of graphs}, volume 558 of Lecture Notes in Mathematics. Springer-Verlag, Berlin-New York, 1976.

\bibitem{YLS} Y. Yang, S. Li and K. Wang, Locally semicomplete weakly distance-regular digraphs, \textit{arXiv}:~2405.03310.

\bibitem{YYF16} Y. Yang, B. Lv and K. Wang, Weakly distance-regular digraphs of valency three, \Rmnum{1},
\textit{Electron. J. Combin.}, 23(2) (2016), Paper 2.12.
	
\bibitem{YYF18} Y. Yang, B. Lv and K. Wang, Weakly distance-regular digraphs of valency three, \Rmnum{2},
\textit{J. Combin. Theory Ser. A}, 160 (2018), 288--315.
	
\bibitem{YYF20} Y. Yang, B. Lv and K. Wang, Quasi-thin weakly distance-regular digraphs,
\textit{J. Algebraic Combin.}, 51 (2020), 19--50.

\bibitem{AM1} Y. Yang, A. Munemasa, K. Wang and W. Zhu, Weakly distance-regular circulants, I,  \textit{J. Combin. Theory Ser. A}, 216 (2025), 106051.

\bibitem{YYF22} Y. Yang and K. Wang, Thick weakly distance-regular digraphs, \textit{Graphs Combin.}, 38 (2022), Paper 37.
	
\bibitem{YYF} Y. Yang, Q. Zeng and K. Wang, Weakly distance-regular digraphs whose underlying graphs are distance-regular, I, \textit{J. Algebraic Combin.}, 59 (2024), 827--847.
	
\bibitem{QZ23} Q. Zeng, Y. Yang and K. Wang, $P$-polynomial weakly distance-regular digraphs, \textit{Electron. J. Combin.}, 30(3) (2023), Paper 3.3.
	
\bibitem{QZ24} Q. Zeng, Y. Yang  and K. Wang, Weakly distance-regular digraphs whose underlying graphs are distance-regular, II, \textit{arXiv}:~2408.02931.
	
\bibitem{PHZ96} P.H. Zieschang, \textit{An Algebraic Approach to Assoication Schemes}, in: Lecture Notes in Mathematics, Vol.1628, Springer, Berlin, Heidelberg, 1996.
	
\bibitem{PHZ05} P.H. Zieschang,
\textit{Theory of Association Schemes}, Springer Monograph in Mathematics, Springer, Berlin, 2005.

\end{thebibliography}
\end{document}